\theoremstyle{plain}
\newtheorem{lemma}{Lemma}[section]
\newtheorem{proposition}[lemma]{\textbf{Proposition}}
\newtheorem{theorem}[lemma]{\textbf{Theorem}}
\newtheorem*{introtheorem}{\textbf{Theorem}}
\newtheorem{cor}[lemma]{\textbf{Corollary}}
\theoremstyle{definition}
\newtheorem{definition}[lemma]{\textbf{Definition}}
\newtheorem*{tmpdefinition}{\textbf{Temporary Definition}}
\theoremstyle{remark}
\newtheoremstyle{indenteddefinition}{8pt}{8pt}{\addtolength{\leftskip}{
1.4em}}{-1.4em}{\itshape}{.}{ }{}
\theoremstyle{indenteddefinition}
\newtheorem{remark}[lemma]{Remark}
\newcommand{\R}{\mathbb{R}}
\newcommand{\C}{\mathbb{C}}
\newcommand{\p}{\mathbb{P}}
\renewcommand\epsilon{\varepsilon}
\renewcommand\tilde{\widetilde}
\newcommand{\tth}{\thinspace}
\algrenewcommand\algorithmicwhile{\textbf{While}}
\algrenewcommand\algorithmicfor{\textbf{For}}
\algrenewcommand\algorithmicdo{\textbf{Do}}
\algrenewcommand\algorithmicif{\textbf{If}}
\algrenewcommand\algorithmicthen{\textbf{Then}}
\algrenewcommand\algorithmicelse{\textbf{Else}}
\algrenewcommand\algorithmicend{\textbf{End}}
\algrenewcommand\algorithmicreturn{\textbf{Return}}
\begin{document}

%
%
%
%
%
%
%
%
%

\title{M\"obius Photogrammetry}

\author[Gallet]{Matteo Gallet}

\address{
Research Institute for Symbolic Computation \\
Johannes Kepler University \\
Altenberger Strasse 69 \\
4040 Linz, Austria.}

\email{mgallet@risc.jku.at}

\author[Nawratil]{Georg Nawratil}

\address{
Institute of Discrete Mathematics and Geometry\\  
Vienna University of Technology \\
Wiedner Hauptstrasse 8-10/104 \\
1040 Vienna, Austria.}

\email{nawratil@geometrie.tuwien.ac.at}

\author[Schicho]{Josef Schicho}

\address{
Research Institute for Symbolic Computation \\
Johannes Kepler University \\
Altenberger Strasse 69 \\
4040 Linz, Austria.}

\email{josef.schicho@risc.jku.at}

\subjclass{53A17 (Kinematics), 14L35 (Classical groups), 14P99 (Real algebraic geometry)}

\keywords{Bond Theory, n-pods, self-motion.}

\begin{abstract} 
Motivated by results on the mobility of mechanical devices called pentapods,
this paper deals with a mathematically freestanding problem, which we call
\emph{M\"obius Photogrammetry}. Unlike traditional photogrammetry, which tries
to recover a set of points in three--dimensional space from a finite set of
central projection, we consider the problem of reconstructing a vector of points
in $\R^3$ starting from its orthogonal parallel projections. Moreover,
we assume that we have partial information about these projections, namely that
we know them only up to M\"obius transformations. The goal in this case is to
understand to what extent we can reconstruct the starting set of points, and to
prove that the result can be achieved if we allow some uncertainties in the
answer. Eventually, the techniques developed in the paper allow us to show that
for a pentapod with mobility at least two, either some anchor points are
collinear, or platform and base are similar, or they are planar and affine
equivalent.
\end{abstract}

\maketitle


\section{Introduction}
\label{photogrammetry:introduction}

In this paper we consider the following problem: given a vector of~$5$ points
$\vec{A} = (A_1, \ldots, A_5)$ in $\R^3$, suppose we have partial information
about its orthogonal projections along all directions in~$\R^3$; in particular,
we suppose to know each of them only up to M\"obius transformations of the
plane. Then we ask if and to what extent we can extract information on~$\vec{A}$ 
starting from this partial knowledge about its orthogonal projections. 

In order to deal with this question, in Section~\ref{photogrammetry:setup} we
start formalizing it in the following way: 
\begin{itemize}
	\item[$\cdot$]
		the set of directions of~$\R^3$ is identified with the unit sphere~$S^2$;
	\item[$\cdot$]
		a set of~$5$ points in the plane, considered up to M\"obius
transformations, gives a point in the moduli space~$M_5$ of five points 
in~$\p^1_{\C}$.
\end{itemize}
So all the information about orthogonal projections of a vector~$\vec{A}$ of
points can be encoded in one function $f_{\vec{A}}: S^2 \longrightarrow M_5$,
which we call a \emph{M\"obius camera}. In Section~\ref{photogrammetry:camera}
we give a formal definition of the map~$f_{\vec{A}}$ and we explain how it can
be thought as a map between projective varieties. We explore some properties of
the M\"obius camera, in particular we relate the degree of the image 
of~$f_{\vec{A}}$ (which is always a curve if the points are not all aligned) 
with the geometric configuration of the points $\{ A_i \}$. The main result of 
this section is Theorem~\ref{theorem:photo}:

\begin{introtheorem}
	Let $\vec{A}$ and $\vec{B}$ be two $5$-tuples of points in~$\R^3$ such that
no~$4$ of them are collinear. Assume that $f_{\vec{A}}(S^2)$ and
$f_{\vec{B}}(S^2)$ are equal as curves in~$M_5$.
	If $\vec{A}$ is coplanar, then~$\vec{B}$ is also coplanar and affine
equivalent to~$\vec{A}$.
	If $\vec{A}$ is not coplanar, then $\vec{B}$ is similar to~$\vec{A}$.
\end{introtheorem}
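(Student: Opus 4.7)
The plan is to extract from the image curve $f_{\vec{A}}(S^2)\subset M_5$ enough combinatorial and Möbius-invariant numerical data to reconstruct the configuration $\vec{A}$ up to the stated equivalence. The main invariants will be associated with the intersections of this image with the natural collision divisors of $M_5$.

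First, I would identify the distinguished points of the image. For each unordered pair $\{i,j\}\subseteq\{1,\ldots,5\}$, the projection $\pi_v(\vec{A})$ is degenerate at the $i$-th and $j$-th positions exactly when $v$ is parallel to $A_iA_j$. This yields a specific Möbius class, namely the intersection of $f_{\vec{A}}(S^2)$ with the collision divisor $D_{ij}\subset M_5$; under the hypothesis that no four of the $A_i$ are collinear, these classes are intrinsic to the curve and, aside from possible triple collisions arising from three collinear points, are pairwise distinct. The equality $f_{\vec{A}}(S^2)=f_{\vec{B}}(S^2)$ therefore induces a bijection $\sigma$ between the $\binom{5}{2}$ pairs indexing $\vec{A}$ and those indexing $\vec{B}$, in such a way that the corresponding line directions coincide in $S^2/\{\pm 1\}=\p^2_{\R}$. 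Furthermore, at each such collision point the Möbius class records the cross-ratio of the three remaining projected points together with the double image point; this cross-ratio is a projective invariant of the orthogonal projection of the other three points of $\vec{A}$ along $A_iA_j$, and must agree with the corresponding quantity for $\vec{B}$ under $\sigma$.

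Next, I would show that this matching of line directions, combined with the cross-ratio constraints, forces the required equivalence. In the non-coplanar case, knowing the ten pairwise directions in $\p^2_{\R}$ restricts $\vec{B}$ to differ from $\vec{A}$ by at most an affine transformation of $\R^3$; the cross-ratio data then serves to rule out residual non-similarity affinities compatible with these directions, exploiting the fact that a genuinely three-dimensional configuration carries Euclidean information beyond the purely affine. In the coplanar case, by contrast, the projection of a planar $5$-tuple along any generic direction is itself an affine image of $\vec{A}$, and affine transformations of the plane lie inside the Möbius group of $\p^1_{\C}$; consequently the image curve is invariant under the larger group of in-plane affine reparametrizations of $\vec{A}$, and the best conclusion is that $\vec{B}$ is coplanar and affine equivalent to $\vec{A}$. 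Coplanarity of $\vec{B}$ itself is read off from the degree invariant of $f_{\vec{B}}(S^2)$ computed in the preceding section, which distinguishes planar from spatial configurations.

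The main obstacle I foresee is precisely the rigidity step in the non-coplanar case: turning a matching of ten pairwise line directions, which a priori only constrains $\vec{B}$ up to an affine transformation of $\R^3$, into a genuine similarity. I expect this to require analyzing several collision points simultaneously and exploiting the fact that the cross-ratio at each such collision, combined with the known line direction, encodes a distance-ratio that is incompatible with a general non-similarity affine transformation as soon as the configuration is truly three-dimensional.
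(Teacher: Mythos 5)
The central step of your plan --- that equality of the image curves forces the pairwise line directions of $\vec{A}$ and $\vec{B}$ to coincide in $\p^2_{\R}$ --- is precisely the point that needs proof, and as stated it does not follow. The intersection point $f_{\vec{A}}(S^2)\cap L_{ij}$ is indeed intrinsic to the curve, but the direction $\pm(A_i-A_j)$ is its \emph{preimage} under $f_{\vec{A}}$, while $\pm(B_i-B_j)$ is the preimage of the same point of $M_5$ under the \emph{different} map $f_{\vec{B}}$; the equality of the images alone says nothing about how these two preimages are related. The paper closes this gap by first proving (Lemmas~\ref{lemma:5plane} and~\ref{lemma:5space}) that in the non-coplanar case both photographic maps are birational onto the common image $D$, so that $\rho=\tilde{f}_{\vec{B}}^{-1}\circ\tilde{f}_{\vec{A}}$ is an automorphism of $\p^1_{\C}$; since $\rho$ commutes with the fixed-point-free real structure (the antipodal map), it is a rotation of $S^2$, and only after replacing $\vec{A}$ by its image under this rotation do the two sets of directions literally coincide. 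Without the birationality statement and the real-structure argument, the direction data is only determined up to an unknown element of $\mathrm{PGL}(2,\C)$ acting on the source sphere, which is far too much freedom for any reconstruction.

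A second, smaller problem: once the ten directions genuinely coincide with the same labeling, the conclusion is already a homothety --- writing $B_i-B_j=\lambda_{ij}(A_i-A_j)$ and using the relation $B_i-B_j=(B_i-B_k)+(B_k-B_j)$ on non-collinear triples forces all the $\lambda_{ij}$ to be equal --- so the cross-ratio data you propose to extract at the collision points is not needed, and the ``rigidity step'' you single out as the main obstacle is an artifact of having weakened the direction information to ``determined up to an affine transformation''. In the coplanar case your heuristic (planar affinities of the source configuration act through M\"obius transformations on every projection, so at best affine equivalence can be recovered) correctly predicts the answer but does not establish it; the paper's argument again goes through a factorization, this time of both maps as the fixed $2{:}1$ covering $C\to\p^1_{\C}$ followed by birational maps onto $D$, so that the comparison isomorphism of the intermediate $\p^1_{\C}$ is now a general invertible $2\times 2$ matrix, i.e.\ a planar affinity, after which the direction-matching argument applies verbatim to give affine equivalence.
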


Eventually, Section~\ref{photogrammetry:pentapods} presents an application of
the theory developed so far to pentapods with mobility greater than or equal
to~$2$. In fact, and this is where the authors took one of the motivations for
this paper, Theorem~3.19 of~\cite{GalletNawratilSchicho} gives a necessary
condition for mobility~2 of $n$-pods in the form of a disjunction of several
statements, one of which being: ``\emph{there are infinitely many pairs $(L,R)$
of elements of~$S^2$ such that the points $\pi_L(p_1), \ldots, \pi_L(p_n)$ and
$\pi_R(P_1), \ldots, \pi_R(P_n)$ differ by an inversion or a similarity}''.
We focus on this case, and using Theorem~\ref{theorem:photo} we show
that base and platform points are either similar or planar and affine
equivalent.

\section{Setting up the M\"obius photogrammetry problem}
\label{photogrammetry:setup}

We are going to consider the following problem: given a vector $\vec{A} = (A_1, 
\ldots, A_5)$ of~$5$ points in~$\R^3$, we want to define a map~$f_{\vec{A}}$, 
which we will call \emph{M\"obius camera}, associating to each direction 
$\varepsilon \in S^2$ the orthogonal projection of~$\vec{A}$
along~$\varepsilon$, considered up to M\"obius transformations. Moreover,
starting from the collection of all orthogonal projections of~$\vec{A}$, we want
to understand to what extent we can reconstruct~$\vec{A}$.

In order to set up this photogrammetric problem in a formal way, first of all 
we have to make clear what do we mean by ``consider up to M\"obius
transformations''. Recall that a \emph{M\"obius transformation} is a 
map~$g$ of the complex projective line~$\p^1_{\C}$ to itself of the form
\[
	g: \left\{ \begin{array}{rcl}
		(1 : z) & \mapsto & \left( 1 : \frac{a \tth z + b}{c \tth z + d} \right)
\quad \mathrm{if\ } z \neq -\nicefrac{d}{c} \\
		(1 : -\nicefrac{d}{c}) & \mapsto & (0 : 1) \\
		(0 : 1) & \mapsto & \left(1 : \nicefrac{a}{c} \right)
	\end{array} \right. \quad \quad \textrm{where } ad - bc \neq 0 
\]
with the convention that $g(0:1) = (0 : 1)$ if $c = 0$. If we are given
two $n$-tuples $(m_1, \ldots, m_n)$ and $(n_1, \ldots, n_n)$ of points in the
plane~$\R^2$, we say that they are \emph{M\"obius equivalent} if, once we
identify~$\R^2$ with $\C \hookrightarrow \p^1_{\C}$, there is a M\"obius
transformation~$g$ sending $m_i$ to $n_i$ for every $i \in \{1,\ldots, n\}$.

The rest of this and the next section are aimed to define the
concept of \emph{M\"obius camera}. We will clarify what are the domain and the 
codomain of this map, and what is its explicit formulation.

\smallskip
We start discussing the domain of our desired map: as described before, it
should be~$S^2$, thought as the set of directions in $\R^3$. On the other hand,
we would like it to be an algebraic variety. We are going to see now that not
only $S^2$ can be considered as a complex projective curve, but it also
naturally carries the structure of a real variety. This property will be crucial
in the proofs of Subsection \ref{photogrammetry:camera:properties}.

\begin{definition}
	A \emph{real structure} on a complex variety is a pair $(X,\alpha)$, where~$X$
is a complex variety and $\alpha$ is an anti-holomorphic involution (see
\cite{Silhol}, Chapter~1, Proposition~1.3).
\end{definition}
\begin{remark}
\label{remark:real_structure_S2}
	An example of a real structure is given by the complex projective 
space~$\p^n_{\C}$ together with componentwise complex conjugation. One can prove 
that there exist exactly two real structures on~$\p^1_{\C}$ (up to 
isomorphism), and they are given by the following two involutions:
	\[ (s,t) \; \mapsto \; (\overline{s},\overline{t}) \quad \mathrm{and} \quad
(s:t) \; \mapsto \; (-\overline{t}:\overline{s}) \]
	In particular, the fixed points of the first involution are precisely the
closed points of~$\p^1_{\R}$, while the second one does not have any fixed
point. Moreover there is a natural bijection between $\p^1_{\C}$ and $S^2$, and 
under this bijection the second involution corresponds to the antipodal map. 
Hence we can think of~$S^2$ as a real algebraic variety, whose 
anti-holomorphic involution is given by the antipodal map. 
\end{remark}

The following result provides another identification of~$S^2$ with an algebraic
curve which is simply a different projective embedding of~$\p^1_{\C}$, but
which enables us to perform the computations needed to define the M\"obius
camera in a simpler way.

\begin{lemma}
\label{lemma:bijection_S2_conic}
	There is a bijection $\gamma: S^2 \longrightarrow C = \big\{ x^2 + y^2 + z^2 = 0 \big\} \subseteq \p^2_{\C}$ such that the following diagram commutes:
	\[ \xymatrix@C=3cm{S^2 \ar[r]^{\mathrm{antipodal\ map}} \ar[d]^{\cong}_{\gamma} & S^2 \ar[d]_{\cong}^{\gamma} \\ C \ar[r]^{\mathrm{componentwise}}_{\mathrm{conjugation}} & C} \]
\end{lemma}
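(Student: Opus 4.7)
The plan is to leverage Remark \ref{remark:real_structure_S2} in order to reduce the statement to constructing an isomorphism of complex projective curves with real structure between $\p^1_{\C}$ (equipped with the fixed-point-free involution $(s:t) \mapsto (-\overline{t}:\overline{s})$) and the conic $C$ (equipped with componentwise complex conjugation). Composing such an isomorphism with the bijection $S^2 \to \p^1_{\C}$ from Remark \ref{remark:real_structure_S2} will produce the desired map $\gamma$.

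First, I would write down an explicit parametrization $\varphi : \p^1_{\C} \to C$, for instance
\[
	\varphi \colon (s:t) \; \longmapsto \; \bigl( s^2 - t^2 \,:\, \ci (s^2 + t^2) \,:\, 2 s t \bigr),
\]
and I would check directly by expanding that the sum of the three squared components vanishes, so that the image does land on $C$. Since $C$ is a smooth irreducible conic in $\p^2_{\C}$, it is isomorphic to $\p^1_{\C}$; to argue that $\varphi$ itself realizes such an isomorphism, I would either note that $\varphi$ is the composition of the degree-two Veronese embedding with a linear change of coordinates on $\p^2_{\C}$, or write down the inverse map explicitly on standard affine charts of $C$.

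The heart of the proof is then the verification that the diagram commutes. Computing one way, applying the antipodal involution first gives $(-\overline{t}:\overline{s})$, which $\varphi$ sends to $\bigl(\overline{t}^2 - \overline{s}^2 : \ci(\overline{t}^2 + \overline{s}^2) : -2 \overline{s} \overline{t}\bigr)$. Computing the other way, applying $\varphi$ first and then componentwise conjugation gives $\bigl(\overline{s}^2 - \overline{t}^2 : -\ci(\overline{s}^2 + \overline{t}^2) : 2\overline{s}\overline{t}\bigr)$. These two triples differ by the overall scalar $-1$, so they represent the same point of $\p^2_{\C}$, which establishes commutativity.

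I do not expect serious conceptual obstacles: the main point to be careful about is the placement of the factor $\ci$ and the signs in the middle coordinate of $\varphi$, since the two involutions must match exactly as maps of projective varieties rather than only up to scaling inside each representative. A sanity check that the setup is consistent is that the antipodal map on $S^2$ is fixed-point free, and correspondingly componentwise conjugation on $C$ has no fixed points because the real equation $x^2+y^2+z^2 = 0$ has no nontrivial real solutions; this matches the classification in Remark \ref{remark:real_structure_S2} and rules out the other real structure on $\p^1_{\C}$.
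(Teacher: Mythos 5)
Your proof is correct, but it takes a genuinely different route from the paper. You fix coordinates: you reduce to the classification of real structures on $\p^1_{\C}$ from Remark~\ref{remark:real_structure_S2} and exhibit an explicit parametrization $(s:t) \mapsto \bigl( s^2 - t^2 : \ci\,(s^2+t^2) : 2st \bigr)$ of the conic, then verify by direct computation that it intertwines the fixed-point-free involution $(s:t)\mapsto(-\overline{t}:\overline{s})$ with componentwise conjugation (your sign bookkeeping checks out: the two images differ by the overall scalar $-1$). The paper instead constructs $\gamma$ intrinsically and frame-theoretically: it sends a direction $\varepsilon$ to the point $\varepsilon' + \ci\,\varepsilon''$ of $\p^2_{\C}$, where $(\varepsilon,\varepsilon',\varepsilon'')$ is a positively oriented orthonormal frame, and checks well-definedness and equivariance from that description. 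Both arguments establish the lemma as stated; what the paper's construction buys, and what your argument does not directly provide, is the specific geometric formula for $\gamma$ that is exploited immediately afterwards in Remark~\ref{remark:dot_product}, where the orthogonal projection $\pi_{\varepsilon}(A)$ is identified with the dot product $p\,x + q\,y + r\,z$ for $\gamma(\varepsilon)=(x:y:z)$; with your $\gamma$ one would still have to check that it agrees with the frame-based one (or redo that identification in coordinates) before the rest of Section~\ref{photogrammetry:camera} goes through. Conversely, your approach makes the ``no other choice of real structure is possible'' point completely transparent via the fixed-point count, which the paper only gestures at.
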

\begin{proof}
	Let $\varepsilon \in S^2$, then pick $\varepsilon', \varepsilon''$ in the orthogonal space $\left\langle \varepsilon \right\rangle^{\perp}$ such that
	\begin{itemize}
		\item[$\cdot$]
			$\varepsilon', \varepsilon''$ form an orthonormal basis of $\left\langle \varepsilon \right\rangle^{\perp}$;
		\item[$\cdot$]
			$\varepsilon, \varepsilon', \varepsilon''$ form a right basis of $\R^3$, namely $\det{\left(\begin{smallmatrix} \varepsilon & \varepsilon' & \varepsilon'' \end{smallmatrix}\right)}  > 0$.
	\end{itemize}
	If $\varepsilon' = (\lambda', \mu', \nu')$ and $\varepsilon'' = (\lambda'', \mu'', \nu'')$, then we consider the vector
	\[ \varepsilon' + i \tth \varepsilon'' \; = \; (\lambda' + i \tth \lambda'', \mu' + i \tth \mu'', \nu' + i \tth \nu'') \in \C^3 \]
	By a direct computation one can check that the point in~$\p^2_{\C}$ given by
$\varepsilon' + i \tth \varepsilon''$ lies on~$C$. We notice that a different
choice of $\varepsilon'$ and $\varepsilon''$ leads to the same point 
in~$\p^2_{\C}$. Then the map $\gamma: \varepsilon \mapsto \varepsilon' + i \tth
\varepsilon''$ is well-defined and satisfies the requirements of the thesis.
\end{proof}
\begin{remark}
\label{remark:three_descriptions_S2}
	Recalling Remark~\ref{remark:real_structure_S2}, we have that $S^2$ is in 
bijection with~$\p^1_{\C}$, and via the previous map~$\gamma$ we obtain an 
isomorphism of real algebraic curves between~$\p^1_{\C}$ and~$C$ given by 
homogeneous polynomials of degree~$2$. We get the following triangle of 
bijections and involutions:
	\[
		\hspace{2.5em} \resizebox{0.9\textwidth}{!}{\xymatrix{ \Big( S^2, \textrm{antipodal map} \Big) \ar@{<->}[rd] \ar@{<->}[rr] & & \Big( \big\{ x^2 + y^2 + z^2 = 0 \big\}, \textrm{componentwise conj.} \Big)  \ar@{<->}[ld] \\
		& \Big( \p^1_{\C}, (s:t) \mapsto (-\overline{t}: \overline{s}) \Big) & }}
	\]
\end{remark}

The identification of~$S^2$ with the conic $C \subseteq \p^2_{\C}$ becomes very
useful when we want to deal with orthogonal projections. 

\begin{definition}
\label{definition:orthogonal_projection}
	Given a unit vector $\epsilon \in S^2$, we say that a linear map
$\pi_{\epsilon}: \R^3 \longrightarrow \R^2$ is an \emph{orthogonal projection
along $\epsilon$} if $\ker{\pi_{\epsilon}} = \left\langle \epsilon
\right\rangle$ and $\pi_{\epsilon}$ is an isometry on $\left\langle \epsilon
\right\rangle^{\perp}$. Moreover we ask that the preimages of the standard
bases of~$\R^2$ lying on~$\langle \varepsilon \rangle^{\perp}$ form,
together with~$\varepsilon$, a positively oriented bases. Note that in this way
$\pi_{\epsilon}$ is well-defined only up to direct Euclidean isometry of the
image, namely rotations around the origin.
\end{definition}

\begin{remark}
\label{remark:dot_product}
	From the definition of~$\gamma$ given in the proof of 
Lemma~\ref{lemma:bijection_S2_conic} we see that the vectors~$\varepsilon'$ 
and~$\varepsilon''$ we form starting from $\varepsilon \in S^2$ satisfy
$\left(\begin{smallmatrix} \varepsilon & \varepsilon' & \varepsilon''
\end{smallmatrix}\right) \in \mathrm{SO}(3, \R)$. One can check that
$\left(\begin{smallmatrix} \varepsilon' & \varepsilon''
\end{smallmatrix}\right)^{t}$ gives the matrix of $\pi_{\varepsilon}$, the
orthogonal projection along~$\varepsilon$. Identifying~$\R^2$ with the complex
plane~$\C$ one finds that if $\gamma(\varepsilon) = (x:y:z)$ and $A = (p,q,r)$
is a point in~$\R^3$, then $\pi_{\varepsilon}(A)$ is given, as a point in~$\C$,
by $p \tth x + q \tth y + r \tth z$. If now we change the representative 
of~$\gamma(\varepsilon)$, this modifies the image under the orthogonal 
projection by possibly a rotation and a dilation.
\end{remark}

\smallskip
Hence taking into account Remark~\ref{remark:dot_product} we can
realize the orthogonal projection~$\pi_{\varepsilon}$ as the dot product
\[ \left\langle (x,y,z), \cdot \right\rangle: \R^3 \longrightarrow \C \] 
where $(x:y:z)$ is any representative of~$\gamma(\varepsilon)$ with Hermitian
norm equal to~$\sqrt{2}$. Thus we can view any orthogonal projection of~$n$
points $\vec{A} = (A_1, \ldots, A_n)$ as an $n$-tuple of points in~$\C$. Then
$\pi_{\varepsilon}(\vec{A})$ can be seen as one single point in $\C^n$. We can
use the embedding $\C \hookrightarrow \p^1_{\C}$ sending $z$ to~$(z:1)$ to
identify $\pi_{\varepsilon}(\vec{A})$ with a point in~$\left( \p^1_{\C}
\right)^n$. We will extensively use this fact in the definition of the M\"obius
camera, and for proving some of its properties. 

\medskip
In order to understand what should be the codomain of our photographic map we 
start with the following known result (see, for example, \cite{JonesSingerman}, 
Chapter~2, Sections~2.1 and~2.2):
\begin{proposition}
\label{proposition:moebius_automorphisms}
 $\big\{ \mathrm{M\ddot{o}bius\ transformations} \big\} \cong \p\mathrm{GL}(2,\C) \cong \mathrm{Aut}(\p^1_{\C})$.
\end{proposition}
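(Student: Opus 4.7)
The plan is to handle both isomorphisms by factoring through the natural homomorphism $\Phi: \mathrm{GL}(2,\C) \to \mathrm{Aut}(\p^1_{\C})$ induced by the linear action of $\mathrm{GL}(2,\C)$ on $\C^2$: the matrix $M = \left(\begin{smallmatrix} a & b \\ c & d \end{smallmatrix}\right)$ sends $(x:y) \in \p^1_{\C}$ to $(ax+by : cx+dy)$. Since composition of such actions agrees with matrix multiplication, $\Phi$ is a group homomorphism. Reading the action in a suitable affine chart recovers the formula $z \mapsto \frac{az+b}{cz+d}$, which is precisely the defining expression of a M\"obius transformation (up to the notational swap between the charts $(z:1)$ and $(1:z)$). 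Hence the image of $\Phi$ is exactly the group of M\"obius transformations. The kernel of $\Phi$ consists of matrices acting trivially on every point of $\p^1_{\C}$; evaluating at $(1:0)$, $(0:1)$, and $(1:1)$ forces $b = c = 0$ and $a = d$, so $\ker \Phi = \C^{*} \cdot I$. This yields the induced isomorphism $\p\mathrm{GL}(2,\C) \cong \{\mathrm{M\ddot{o}bius\ transformations}\}$.

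For the second isomorphism I would show that every $\phi \in \mathrm{Aut}(\p^1_{\C})$ is a M\"obius transformation via the classical triple-transitivity argument. The group of M\"obius transformations acts sharply transitively on ordered triples of distinct points of $\p^1_{\C}$ (readily seen through the cross ratio, or by solving three explicit linear equations for the coefficients), so one can pick a M\"obius $g$ such that $g \circ \phi$ fixes $(1:0)$, $(0:1)$, and $(1:1)$. The statement then reduces to the claim that an automorphism of $\p^1_{\C}$ fixing three distinct points must be the identity. For this I would invoke two standard facts: an invertible morphism $\p^1_{\C} \to \p^1_{\C}$ has degree one, and any degree-one self-map of $\p^1_{\C}$ is a fractional linear expression; three fixed points then over-determine such an expression and force it to be the identity, so $\phi = g^{-1}$ is M\"obius.

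The main obstacle is this last step: establishing that every automorphism of $\p^1_{\C}$ is given by a fractional linear expression. This can be done algebraically by showing that $\mathrm{Aut}_{\C}(\C(z))$ is generated by the substitutions $z \mapsto z+a$, $z \mapsto \lambda z$ (with $\lambda \neq 0$), and $z \mapsto 1/z$; or geometrically by observing that degree-$d$ morphisms $\p^1_{\C} \to \p^1_{\C}$ correspond to pairs of coprime binary forms of degree $d$, with only $d=1$ yielding invertible maps. Either route is classical; once it is in place, the remainder of the argument is formal.
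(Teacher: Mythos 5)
Your argument is correct, but note that the paper does not actually prove this proposition: it is quoted as a known result with a pointer to Jones--Singerman, \emph{Complex functions}, Chapter~2, Sections~2.1 and~2.2, so there is no in-paper proof to compare against. What you supply is the standard textbook argument, and it is sound: the homomorphism $\mathrm{GL}(2,\C) \to \mathrm{Aut}(\p^1_{\C})$ has image the M\"obius group and kernel the scalar matrices (your evaluation at $(1:0)$, $(0:1)$, $(1:1)$ is the right computation), giving the first isomorphism; and the surjectivity of $\p\mathrm{GL}(2,\C) \to \mathrm{Aut}(\p^1_{\C})$ rests, as you correctly isolate, on the fact that every automorphism of $\p^1_{\C}$ is fractional linear, for which either of your two classical routes (the description of $\mathrm{Aut}_{\C}(\C(z))$, or the observation that a morphism given by a coprime pair of binary degree-$d$ forms is invertible only for $d=1$) is adequate. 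One small structural remark: once you know that every automorphism is given by a degree-one fractional linear expression, it is already a M\"obius transformation, so the preliminary reduction via sharp triple transitivity to an automorphism fixing $(1:0)$, $(0:1)$, $(1:1)$ is logically redundant --- harmless, but it makes the proof look as if it needed the fixed-point count when it does not. Your remark about the chart convention is also apt, since the paper writes its M\"obius transformations in the chart $(1:z)$ rather than $(z:1)$.
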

Due to Proposition~\ref{proposition:moebius_automorphisms} we can use the
natural action of~$\p\mathrm{GL}(2,\C)$ on~$\left( \p^1_{\C} \right)^n$ to
express that two orthogonal projections are M\"obius equivalent. This leads us
to the following definition, which we denote as temporary because unfortunately,
despite the fact that it looks all-embracing and clean, it will not be very
useful for us.
\begin{tmpdefinition}
  Let $\vec{A}$ be a finite set of distinct points in~$\R^3$ and $\epsilon \in 
S^2$. The \emph{M\"obius picture of}~$\vec{A}$ \emph{along}~$\epsilon$ is the 
equivalence class, under the action of~$\p\mathrm{GL}(2,\C)$, of any 
orthogonal parallel projection of~$\vec{A}$ along the direction~$\epsilon$, 
considered as an $n$-tuple of points in~$\p^1_{\C}$.
\end{tmpdefinition}
\begin{remark}
	We notice that the concept of M\"obius picture is well-defined. In fact,
although the choice of different orthogonal projections along the same direction
determines different points of $\left( \p^1_{\C} \right)^n$, they all differ by
a M\"obius transformation (given by a rotation, as mentioned in the end of
Definition~\ref{definition:orthogonal_projection}), so their equivalence class
is the same.
\end{remark}
\begin{remark}
  Since the group of M\"obius transformations is $3$-transitive, then all
configurations of~$n$ points in~$\R^2$ are M\"obius equivalent for $1 \leq n
\leq 3$, so M\"obius photogrammetry can be reasonably approached only if $n
\geq 4$.
\end{remark}
The problem with the Temporary Definition is that it involves an object, the
quotient of~$\left( \p^1_{\C} \right)^n$ by the action of~$\p\mathrm{GL}(2,\C)$,
which does not have good geometric properties, namely it does not have a natural
structure of an algebraic variety. In 
Subsection~\ref{photogrammetry:camera:embedding} we will see that we
can obtain a much better object at a fair price, and we will focus on the case
which is most interesting for us, namely~$n = 5$. On the other hand, this
operation has a cost: we will not be able to define M\"obius pictures for any
arbitrary configuration of points (see Footnote~\ref{footnote:condition}), but 
we need to put some restrictions. However, we will see that the concept of 
M\"obius camera is meaningful for any configuration of points (see 
Remark~\ref{remark:regular_rational}). 

\section{The M\"obius camera}
\label{photogrammetry:camera}

\subsection{A projective embedding of the moduli space of~$5$ points 
in~$\p^1_{\C}$}
\label{photogrammetry:camera:embedding}

Geometric Invariant Theory tells us that, in order to obtain our desired set of
equivalence classes under the action of~$\p\mathrm{GL}(2,\C)$, which we denote
by~$M_5$\footnote{In the literature this object is usually denoted 
by~$\mathcal{M}_{0,5}$, since it is the moduli space of genus~0 smooth curves
with~5 marked points, but here we will always omit the index~$0$.}, we cannot
consider the equivalence classes of all $5$-tuples, but we have to restrict to
an open subset of $\left(\p^1_{\C}\right)^5$ (for an introduction to this 
topic, see~\cite{Dolgachev2003}, in particular Chapter~$6$, 
or~\cite{MumfordFogartyKirwan}, in particular Chapters~$0$ and~$1$). Therefore 
we will be forced to impose some conditions on the vector~$\vec{A}$ of points 
in~$\R^3$, in order to ensure that it is possible to define its M\"obius 
picture along a given direction. After accepting this limitation one can 
construct an embedding of the quotient in projective space defined by invariants 
of the $5$-tuple\footnote{For $n = 4$, there are two invariants, defining an 
open embedding $M_4 \hookrightarrow \p^1_{\C}$; the quotient of the two 
projective invariants is an absolute invariant, the cross ratio. This is in 
accordance with the fact that the projective equivalence of two $4$-tuples of 
points in~$\p^1_{\C}$ is completely determined by their cross ratio.}. It is 
possible to embed~$M_5$ as a quintic surface in~$\p^5_{\C}$: 
in~\cite{HowardMillisonSnowdenVakil} it is explained a possible way to determine
this surface and the quotient map~$\left( \p^1_{\C} \right)^5 \dashrightarrow
M_5$. We briefly describe the procedure:
\begin{itemize}
	\item[1.]
		Consider a convex pentagon~$P$ in the plane, and construct all plane
undirected multigraphs without loops whose set of vertices coincides with the
set of vertices of~$P$, and which satisfy the following conditions:	
\begin{itemize}
			\item[$\cdot$]
				edges are given by segments;
			\item[$\cdot$]
				any two edges do not intersect;
			\item[$\cdot$]
				the valency of every vertex is~2;
		\end{itemize}
		There are exactly 6 of these graphs, showed in Figure~\ref{figure:graphs}.
		\begin{figure}[ht!]
			\begin{tabular}{c|c|c}
				\subfigure{
					\begin{overpic}[width=0.2\textwidth]{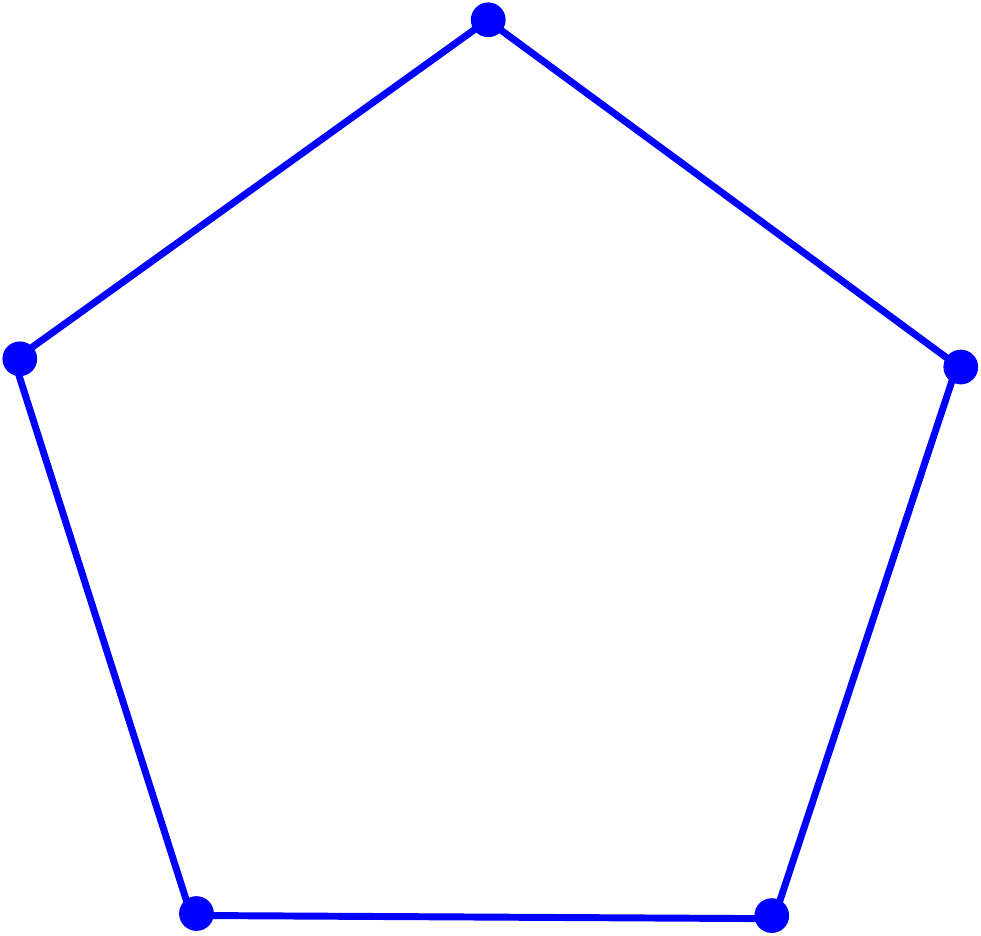}
						\begin{small}
							\put(46.5,97){$1$}
							\put(100,60){$2$}
							\put(80,-10){$3$}
							\put(12,-10){$4$}
							\put(-7,60){$5$}
						\end{small} 
					\end{overpic}
				} \hspace{0.4cm} & \hspace{0.4cm}
				\subfigure{
					\begin{overpic}[width=0.2\textwidth]{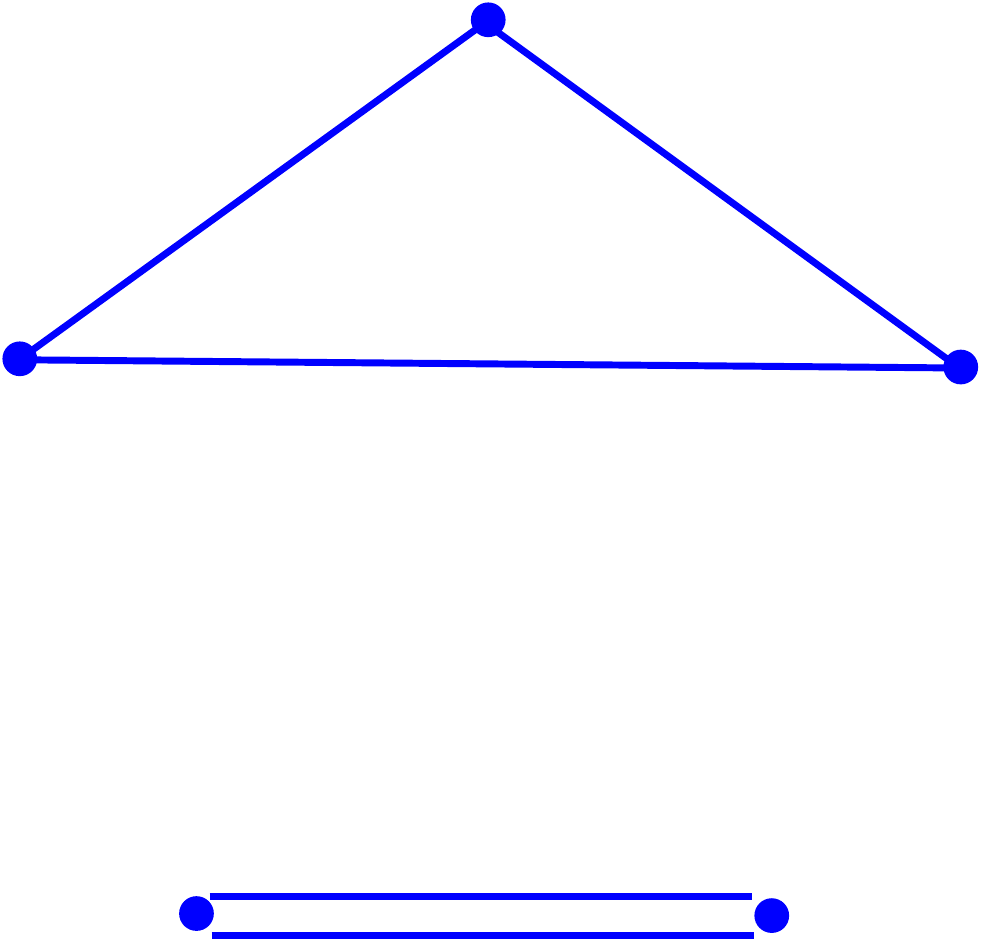}
						\begin{small}
							\put(46.5,97){$1$}
							\put(100,60){$2$}
							\put(80,-10){$3$}
							\put(12,-10){$4$}
							\put(-7,60){$5$}
						\end{small} 
					\end{overpic}
				} \hspace{0.4cm} & \hspace{0.4cm}
				\subfigure{
					\begin{overpic}[width=0.2\textwidth]{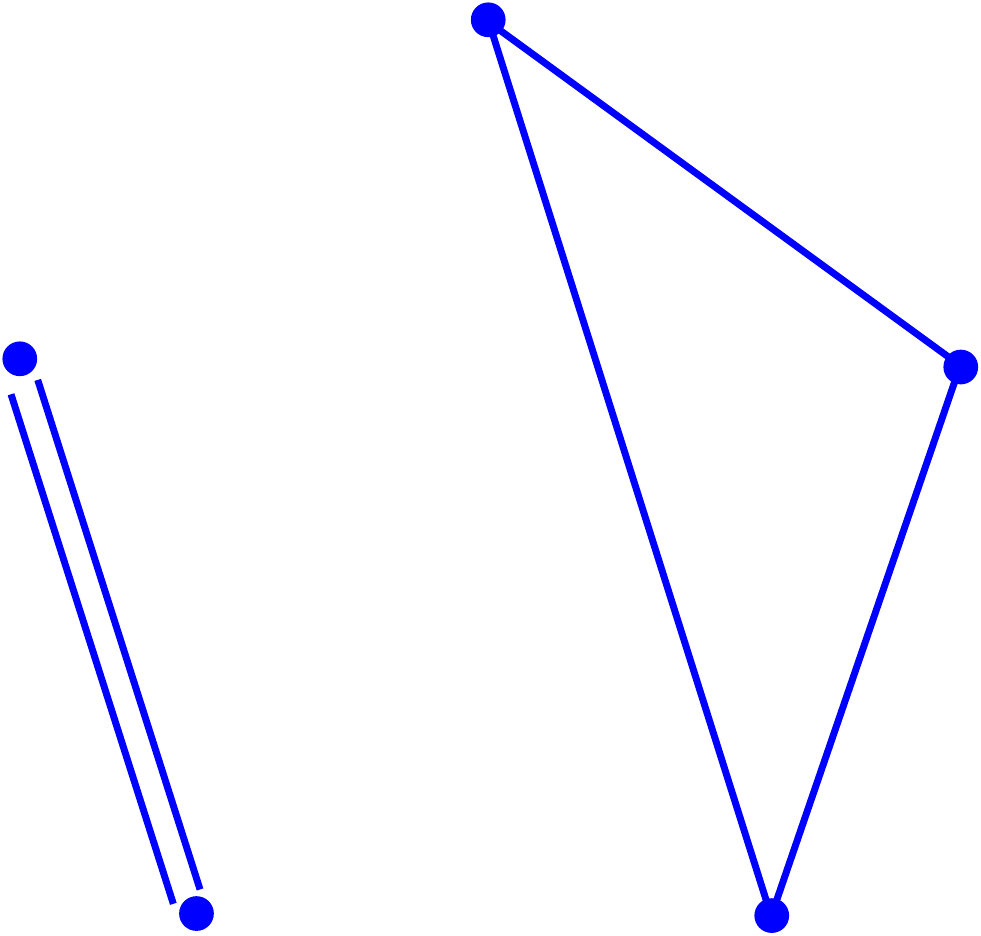}
						\begin{small}
							\put(46.5,97){$1$}
							\put(100,60){$2$}
							\put(80,-10){$3$}
							\put(12,-10){$4$}
							\put(-7,60){$5$}
						\end{small} 
					\end{overpic}
				} \\ [4mm] \hline & & \\ 
				\subfigure{
					\begin{overpic}[width=0.2\textwidth]{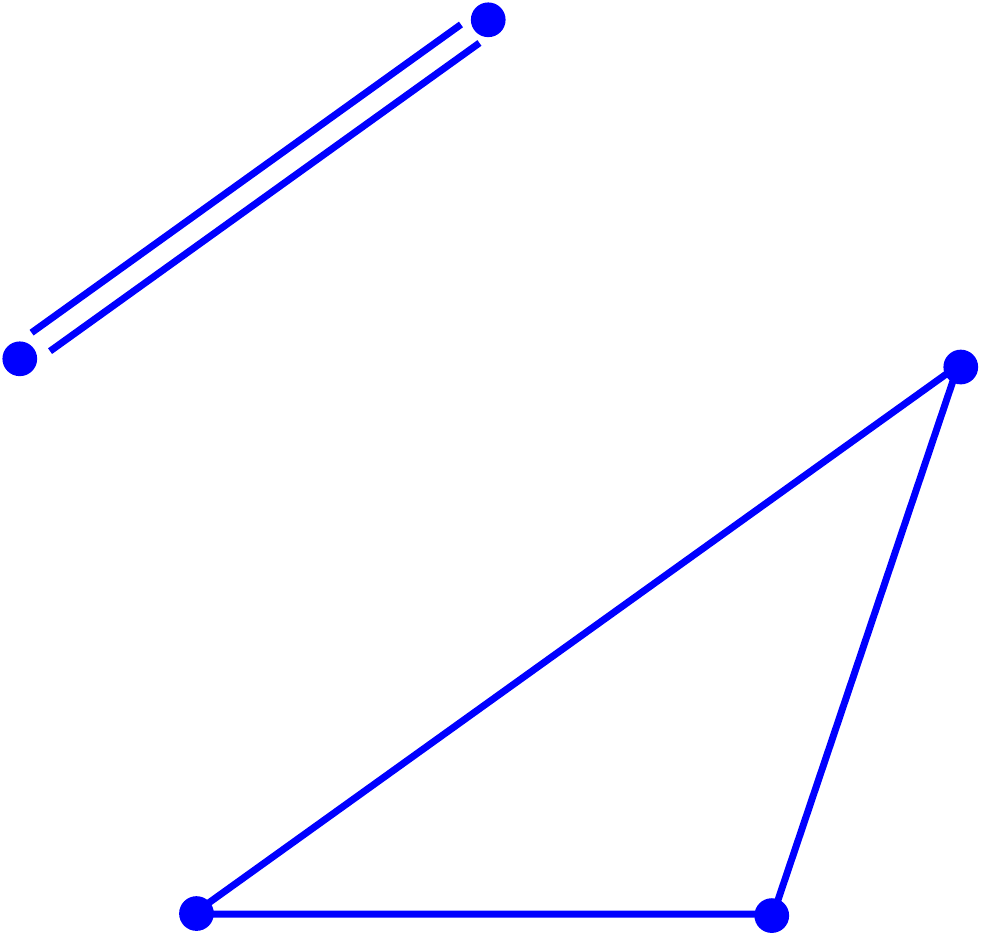}
						\begin{small}
							\put(46.5,97){$1$}
							\put(100,60){$2$}
							\put(80,-10){$3$}
							\put(12,-10){$4$}
							\put(-7,60){$5$}
						\end{small} 
					\end{overpic}
				} \hspace{0.4cm} & \hspace{0.4cm}
				\subfigure{
					\begin{overpic}[width=0.2\textwidth]{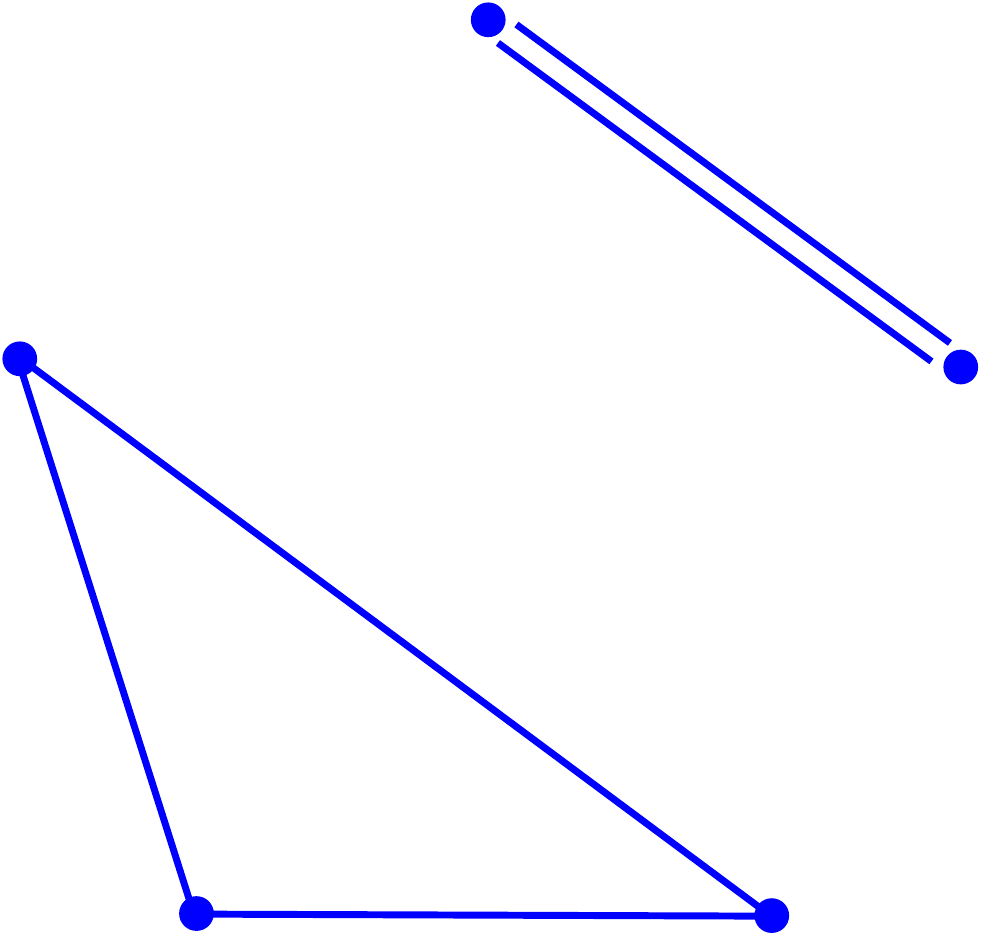}
						\begin{small}
							\put(46.5,97){$1$}
							\put(100,60){$2$}
							\put(80,-10){$3$}
							\put(12,-10){$4$}
							\put(-7,60){$5$}
						\end{small} 
					\end{overpic}
				} \hspace{0.4cm} & \hspace{0.4cm}
				\subfigure{
					\begin{overpic}[width=0.2\textwidth]{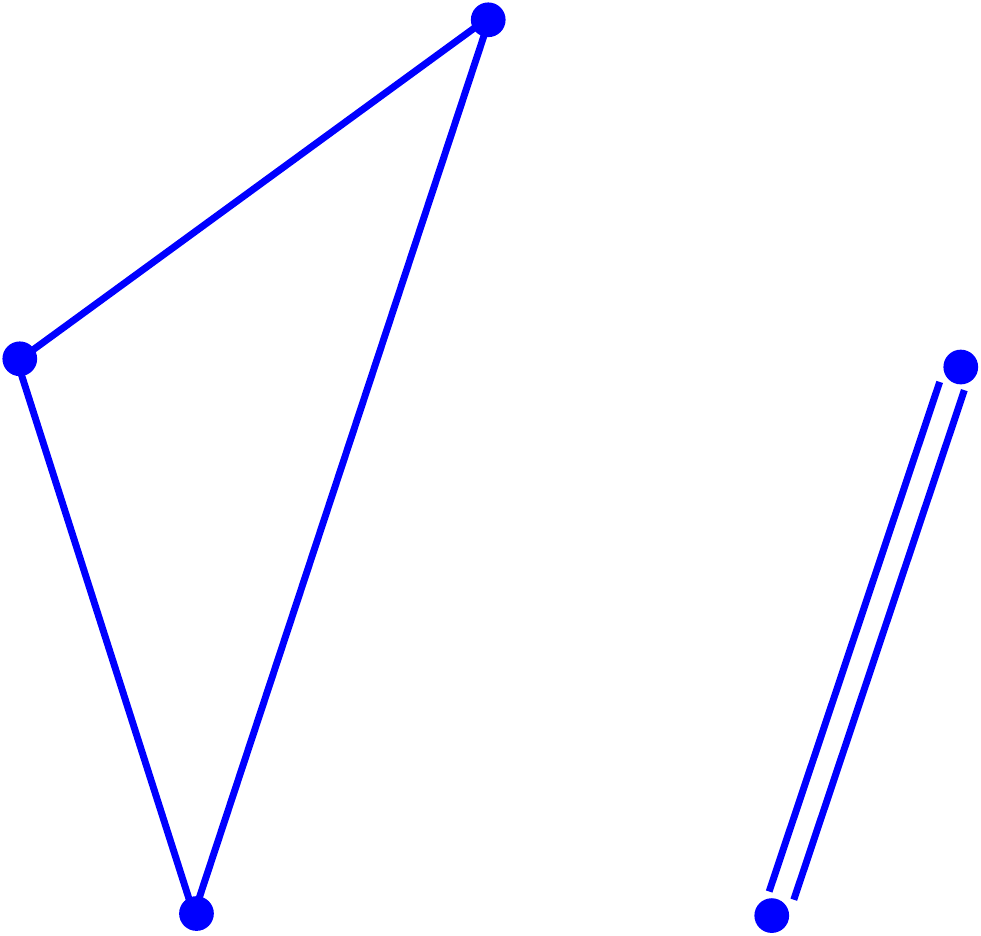}
						\begin{small}
							\put(46.5,97){$1$}
							\put(100,60){$2$}
							\put(80,-10){$3$}
							\put(12,-10){$4$}
							\put(-7,60){$5$}
						\end{small} 
					\end{overpic}
				}
			\end{tabular}
		\caption{The only six planar undirected multigraphs without loops with 
vertices on a regular pentagon, valency~$2$ and non-intersecting edges.}
		\label{figure:graphs}
		\end{figure}
	\item[2.]
		Associate to each graph~$G = (E,V)$ a homogeneous polynomial in the 
coordinates $\{(a_i:b_i)\}$ of~$\left( \p^1_{\C} \right)^5$ according to the 
following rules:
		\begin{itemize}
			\item[i.]
				for every edge $e \in E$, $e = (i,j)$ with $i < j$, define
				\[ \varphi_{e} \; = \; a_i \tth b_j - a_j \tth b_i \]
			\item[ii.]
				set 
				\[ \varphi_{G} \; = \; \prod_{e \in E} \varphi_{e} \]
		\end{itemize}
		For example the polynomial associated to the first graph in 
		Figure~\ref{figure:graphs} is
		\[ \hspace{0.8cm} \resizebox{0.85\textwidth}{!}{$\varphi_0 = (a_1 \tth b_2 
- a_2 \tth b_1) (a_2 \tth b_3 - a_3 \tth b_2) (a_3 \tth b_4 - a_4 \tth b_3)(a_4 
\tth b_5 - a_5 \tth b_5)(a_1 \tth b_5 - a_5 \tth b_1)$} \]
	\item[3.]
		These polynomials determine a rational map $\varphi: \left( \p^1_{\C} \right)^5 \dashrightarrow \p^{5}_{\C}$.
	\item[4.]
		Consider the open set
		\[ \mathcal{U} \; = \; \big\{ (m_1, \ldots, m_5) \, : \; \textrm{no three of
the  } m_i \textrm{ coincide} \big\} \; \subseteq \; \left( \p^1_{\C} \right)^5
\]
	\item[5.]
		We have that $\mathrm{image} \left(\varphi_{|_\mathcal{U}}\right) = M_5$. It turns out that if we take coordinates $t, x_1, \ldots, x_5$ in $\p^5_{\C}$, then the equations for $M_5$ are:
		\[ x_{i-2} \tth x_{i+2} \; = \; t \tth x_i + t^2 \quad \quad \forall i \in \{1, \ldots, 5\} \]
		where the indices are taken modulo~$5$.
\end{itemize}
\begin{remark}
\label{remark:description_M_5}
  It is known that~$M_5$ is a Del Pezzo surface of degree~$5$: exactly~$10$
lines lie on such a surface and they correspond to equivalence classes
of $5$-tuples $(m_1, \ldots, m_5)$ for which at least two points coincide. We
denote by~$L_{ij}$ the line in~$M_5$ corresponding to classes for which $m_i =
m_j$.
\end{remark}
\begin{remark}
\label{remark:real_points_M_5}
  The variety~$M_5$ has a canonical real structure, inherited from the real
structure of the projective line. An equivalence class is real (namely, it is a
fixed point for the anti-holomorphic involution on~$M_5$) if and only if it can
be represented by a $5$-tuple of real points in~$\p^1_{\C}$. Equivalently, the
points are colinear or cocircular; in fact, we recall these two facts about
automorphisms of~$\p^1_{\C}$:
  \begin{itemize}
    \item[$\cdot$]
    M\"obius transformations map lines and circles to lines and circles;
    \item[$\cdot$]
    the action of M\"obius transformations is transitive on lines and circles.
  \end{itemize}
  Since a $5$-tuple is given by real points in~$\p^1_{\C}$ if and only if the
corresponding points in~$\R^2$ lie on a line, the claim follows from the
previous two considerations. 
\end{remark}

\subsection{Definition of the M\"obius camera}
\label{photogrammetry:camera:definition}

From this construction we infer the condition we have to impose on the 
vector~$\vec{A}$ of points in~$\R^3$ so that we can speak of a M\"obius
picture along an arbitrary direction\footnote{\label{footnote:condition} This 
is the condition on configurations mentioned at the end of
Section~\ref{photogrammetry:setup}.}: no~$3$ points among the $\{A_i\}$ should
be aligned. In this way for every $\epsilon \in S^2$ we will have that
$\pi_{\epsilon}(\vec{A})$ lies on $\mathcal{U}$, hence its equivalence class is
a well-defined element of~$M_5$.
\begin{definition}
\label{definition:moebius_picture}
   Let~$\vec{A}$ be a vector of~$5$ points in~$\R^3$ and $\epsilon \in S^2$. 
Suppose that no three points of~$\vec{A}$ lie on a line parallel to~
$\varepsilon$. The \emph{M\"obius picture of}~$\vec{A}$ \emph{along}~$\epsilon$ 
is the point in $M_5$ given by the equivalence class (under the action of 
$\p\mathrm{GL}(2,\C)$) of any orthogonal parallel projection of~$\vec{A}$ along 
the direction~$\epsilon$, considered as an $n$-tuple of points in $\p^1_{\C}$. 
\end{definition}

Our next task is to define the notion of M\"obius camera, namely a function 
which, once fixed a vector of points, takes a direction $\epsilon \in S^2$ and 
associates to it the M\"obius picture of the points along that direction. 

\begin{definition}
\label{definition:moebius_camera}
  Let $\vec{A}$ be a vector of~$5$ points in~$\R^3$. The \emph{M\"obius 
camera}, or \emph{photographic map}, for~$\vec{A}$ is the morphism of varieties 
given by:
	\[ 
		\begin{array}{rccc}
			f_{\vec{A}}: & C & \longrightarrow & M_5 \subseteq \p^5_{\C} \\
			& c & \mapsto &  \;\; \varphi \Big( \, \big( \pi_{\epsilon}(A_1) : 1 \big), \, \ldots \, , \big( \pi_{\epsilon}(A_5) : 1 \big) \, \Big) \\
			& & & = \varphi \Big( \, \big( \! \left\langle c, A_1 \right\rangle : 1 \big), \, \ldots \, , \big( \! \left\langle c, A_5 \right\rangle : 1 \big) \, \Big)
		\end{array}
	\]
	where~$C$ is the curve $\{ x^2 + y^2 + z^2 = 0\} \subseteq \p^2_{\C}$, and $c
\leftrightarrow \epsilon$ under the bijection~$\gamma$ established in 
Lemma~\ref{lemma:bijection_S2_conic}; moreover the map $\varphi: \left( 
\p^1_{\C} \right)^5 \dashrightarrow M_5$ is the quotient map defined in 
Subsection~\ref{photogrammetry:camera:embedding}. We denote by 
$\tilde{f}_{\vec{A}}$ the precomposition of~$f_{\vec{A}}$ by the 
parametrization of~$C$ described in
Remark~\ref{remark:three_descriptions_S2}.
\end{definition}
\begin{remark}
\label{remark:regular_rational}
	The M\"obius picture of~$\vec{A}$ cannot be defined for those~$c \in C$ such
that there exist three points in~$\vec{A}$ lying on a line parallel to the
direction defined by~$c$. Since the points~$c \in C$ for
which the M\"obius picture is defined form an open subset of~$C$, then the 
map~$f_{\vec{A}}$ is a priori a rational one. However, since $C$ is a smooth
curve, then $f_{\vec{A}}$ extends to a regular map, namely it is defined also
on the points which do not admit a M\"obius picture. In algebraic terms, the
polynomials defining the function have a common factor vanishing at those
points, which can be canceled.
\end{remark}
\begin{remark}
  The map~$\varphi$ is given by homogeneous polynomials of degree~$5$ in the
coordinates $\{ (a_i: b_i) \}$ of $\left( \p^1_{\C} \right)^5$. Hence
$\tilde{f}_{\vec{A}}$ is given by homogeneous polynomials of degree~$10$ in the
coordinates~$(s:t)$ of~$\p^1_{\C}$.
\end{remark}

\subsection{Properties of the M\"obius camera}
\label{photogrammetry:camera:properties}

The following lemmata describe the behavior of the image of a M\"obius camera
depending on the geometry of the vector~$\vec{A}$.
\begin{lemma} 
\label{lemma:5plane}
	Let $\vec{A} = (A_1, \ldots, A_5)$ be a $5$-tuple of coplanar points which are 
not collinear. Then the photographic map $f_{\vec{A}}: C \longrightarrow M_5$ is 
$2:1$ to a rational curve of degree~$5$, $4$, $3$, or~$2$ in~$M_5$.
\end{lemma}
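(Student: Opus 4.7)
The strategy is to exploit coplanarity to factor $f_{\vec{A}}$ through a $2{:}1$ projection $C \to \p^1_\C$, show that the induced map into $M_5$ is birational onto its image, and then read off the image degree from a combinatorial analysis of its base-points.

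After a direct isometry we may assume $A_i = (p_i, q_i, 0)$ for $i = 1, \ldots, 5$. By Remark~\ref{remark:dot_product}, for $c = (x:y:z) \in C$ one has $\pi_{\varepsilon}(A_i) = p_i x + q_i y$, which depends only on $(x:y)$ and not on $z$. Hence $f_{\vec{A}} = h \circ \rho$, where $\rho : C \to \p^1_\C$, $(x:y:z) \mapsto (x:y)$, is the degree-$2$ projection from the point $(0:0:1) \notin C$, and $h : \p^1_\C \to M_5$ is the morphism $(x:y) \mapsto \varphi\bigl((p_1 x + q_1 y : 1), \ldots, (p_5 x + q_5 y : 1)\bigr)$. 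Since $\varphi$ is given by $6$ polynomials of degree~$5$ and each $p_i x + q_i y$ is linear in $(x,y)$, $h$ is defined by $6$ homogeneous polynomials of degree~$5$, and its image is a rational curve of degree at most~$5$ in $M_5 \subseteq \p^5_\C$.

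The delicate step is to verify that $h$ is birational onto its image, so that $f_{\vec{A}}$ is exactly $2{:}1$ on its image. Writing $u_{ab}(t) = (p_a - p_b)t + (q_a - q_b)$ in the affine chart $t = x/y$, the cross-ratio of any four of the projections takes the form $CR_{ijkl}(t) = \frac{u_{ik}(t)\, u_{jl}(t)}{u_{il}(t)\, u_{jk}(t)}$, and a short computation shows it is constant precisely when $A_i, A_j, A_k, A_l$ are collinear. Since not all five $A_i$ are collinear, we can always pick a non-collinear $4$-subset: if within it some three points are collinear then numerator and denominator share a linear factor and $CR_{ijkl}$ becomes a M\"obius function of $t$, which alone forces $h$ to be injective; otherwise $CR_{ijkl}$ has degree exactly~$2$ and one combines it with a cross-ratio from a second non-collinear $4$-subset, whose deck involution on $\p^1_\C$ generically differs, to conclude again that $h(t) = h(t')$ only when $t = t'$.

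Once birationality is established, the image degree equals $5 - b$, where $b$ is the total common vanishing multiplicity of the $\varphi_G$ at the base-points of $h$. A base-point $(x_0:y_0)$ is a direction along which three or more of the projections $p_i x_0 + q_i y_0$ coincide, i.e.\ a direction parallel to a line in the common plane meeting at least three $A_i$. Using the explicit form of the $\varphi_G$'s as products over the six non-crossing $2$-regular multigraphs in Figure~\ref{figure:graphs}, one verifies that each collinear triple contributes a base-point of common multiplicity~$1$, while a collinear $4$-tuple yields a single base-point of common multiplicity~$3$ (each such multigraph uses exactly three edges, counted with multiplicity, among the six pairs of any fixed $4$-subset, since the remaining vertex absorbs degree~$2$). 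A short combinatorial check rules out more than two collinear triples unless four points become collinear, so $b \in \{0, 1, 2, 3\}$, giving image degree $5$, $4$, $3$, or $2$ respectively. The main obstacle in the plan is the birationality step: one has to be sure that in the ``generic'' subcase the deck involutions attached to two different non-collinear $4$-subsets do not coincide.
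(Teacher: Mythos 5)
Your factorization $f_{\vec{A}} = h \circ \rho$ through the $2{:}1$ projection $(x:y:z)\mapsto(x:y)$ and your concluding base-point count (common multiplicity $1$ per collinear triple, $3$ for a collinear quadruple, at most two triples otherwise, hence image degree $5-b$ with $b\in\{0,1,2,3\}$) coincide with the paper's proof, which exhibits the common factors of the six components explicitly in its four configuration cases. Where you genuinely diverge is the birationality of $h$. The paper argues by degree arithmetic: $\deg(h)\cdot\deg\bigl(h(\p^1_{\C})\bigr)=5-b$, and the non-birational options are excluded because the image would have to be one of the ten lines $L_{ij}$ (impossible, since no pair of projected points can coincide in every picture) or, in the $5-b=4$ case, a conic belonging to one of the five conic families of the Del Pezzo surface $M_5$, which is ruled out by counting which lines $L_{ij}$ the image must meet. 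You instead pull back cross-ratios through the forgetful maps $M_5\dashrightarrow M_4$ and prove injectivity of $h$ directly. This is legitimate and in one respect cleaner: whenever some non-collinear $4$-subset contains a collinear triple, the corresponding cross-ratio is a nonconstant M\"obius function of $t$ and injectivity is immediate, which disposes of the paper's cases (b), (c) and (d) without any Del Pezzo geometry.

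In the remaining case (no three points collinear), however, your argument has the gap you yourself flag: you assert but do not prove that two different $4$-subsets yield degree-$2$ cross-ratios with distinct deck involutions. This does require proof, since if all the involutions coincided with a single $\sigma$ then every cross-ratio, and hence $h$ itself, would be $\sigma$-invariant, making $h$ a $2{:}1$ map and $f_{\vec{A}}$ a $4{:}1$ map. The gap closes in either of two ways. The cheap fix is the paper's: here $5-b=5$ is prime, so $\deg(h)\in\{1,5\}$, and $\deg(h)=5$ would force the image to be a line, excluded as above. Alternatively, the involutions can be computed: the pencil of fibres of $CR_{ijkl}$ contains the divisors $t_{ik}+t_{jl}$, $t_{il}+t_{jk}$ and $t_{ij}+t_{kl}$, where $t_{ab}$ is the zero of $u_{ab}$, so that $t_{ab}=t_{cd}$ exactly when $\overrightarrow{A_aA_b}$ and $\overrightarrow{A_cA_d}$ are parallel; hence the deck involution of $CR_{ijkl}$ swaps $t_{ab}\leftrightarrow t_{cd}$ for each splitting of $\{i,j,k,l\}$ into two pairs. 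If $\sigma_{1234}=\sigma_{1235}$, then $t_{34}=\sigma(t_{12})=t_{35}$, forcing $A_3,A_4,A_5$ to be collinear, contrary to hypothesis. With either patch your proof is complete.
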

\begin{proof} 
	Suppose that the~$A_i$ are coplanar: then, after a suitable change of
coordinates, we can assume that $A_i = (p_i, q_i,0)$ for every~$i$, and in this
case the photographic map~$f_{\vec{A}}$ factors through the restriction to~$C$
of the projection $\tau_{x,y}: \p^2_{\C} \longrightarrow \p^1_{\C}$ sending
$(x:y:z) \mapsto (x:y)$, which is a $2:1$~map. Hence we get
	\[ \xymatrix{ C \ar[rr]^-{f_{\vec{A}}} \ar[dr]_-{\tau_{x,y}} & & M_5 \\ &
\p^1_{\C} \ar[ru]_- {g_{\vec{A}}} } \]
	If we show that $g_{\vec{A}}$ is birational, then $f_{\vec{A}}$ is $2:1$. 
	The map $g_{\vec{A}}$ is given by~$6$ components, each of which is the 
product of five linear polynomials in~$x$ and~$y$. Each of these polynomials is 
of the form $G_{ij} = x (p_i - p_j) + y (q_i - q_j)$. Hence the components 
of~$g_{\vec{A}}$ have the following structure:
	\begin{equation}
	\label{equation:structure_plane_g}
		\begin{array}{rcccccc}
			\left( g_{\vec{A}} \right)_{0} & = & G_{12} & G_{23} & G_{34} & G_{45} & G_{15} \\
			\left( g_{\vec{A}} \right)_{1} & = & G_{12} & G_{25} & G_{15} & G_{34} & G_{34} \\
			\left( g_{\vec{A}} \right)_{2} & = & G_{12} & G_{23} & G_{13} & G_{45} & G_{45} \\
			\left( g_{\vec{A}} \right)_{3} & = & G_{23} & G_{34} & G_{24} & G_{15} & G_{15} \\
			\left( g_{\vec{A}} \right)_{4} & = & G_{34} & G_{45} & G_{35} & G_{12} & G_{12} \\
			\left( g_{\vec{A}} \right)_{5} & = & G_{14} & G_{45} & G_{15} & G_{23} & G_{23} \\
		\end{array}
	\end{equation}
	We notice that if the lines~$\overrightarrow{A_iA_j}$ 
and~$\overrightarrow{A_hA_k}$ are parallel, then $G_{ij}$ and $G_{hk}$ only 
differ by a scalar multiple. Since the~$5$ points are not collinear, only four 
configurations are allowed (after possibly relabeling the points), as shown in 
Figure \ref{figure:5plane}.
	\begin{figure}[ht!]
		\begin{tabular}{c|c}
			\subfigure[][]{
			\label{figure:5plane-a}
				\begin{overpic}[width=0.25\textwidth]{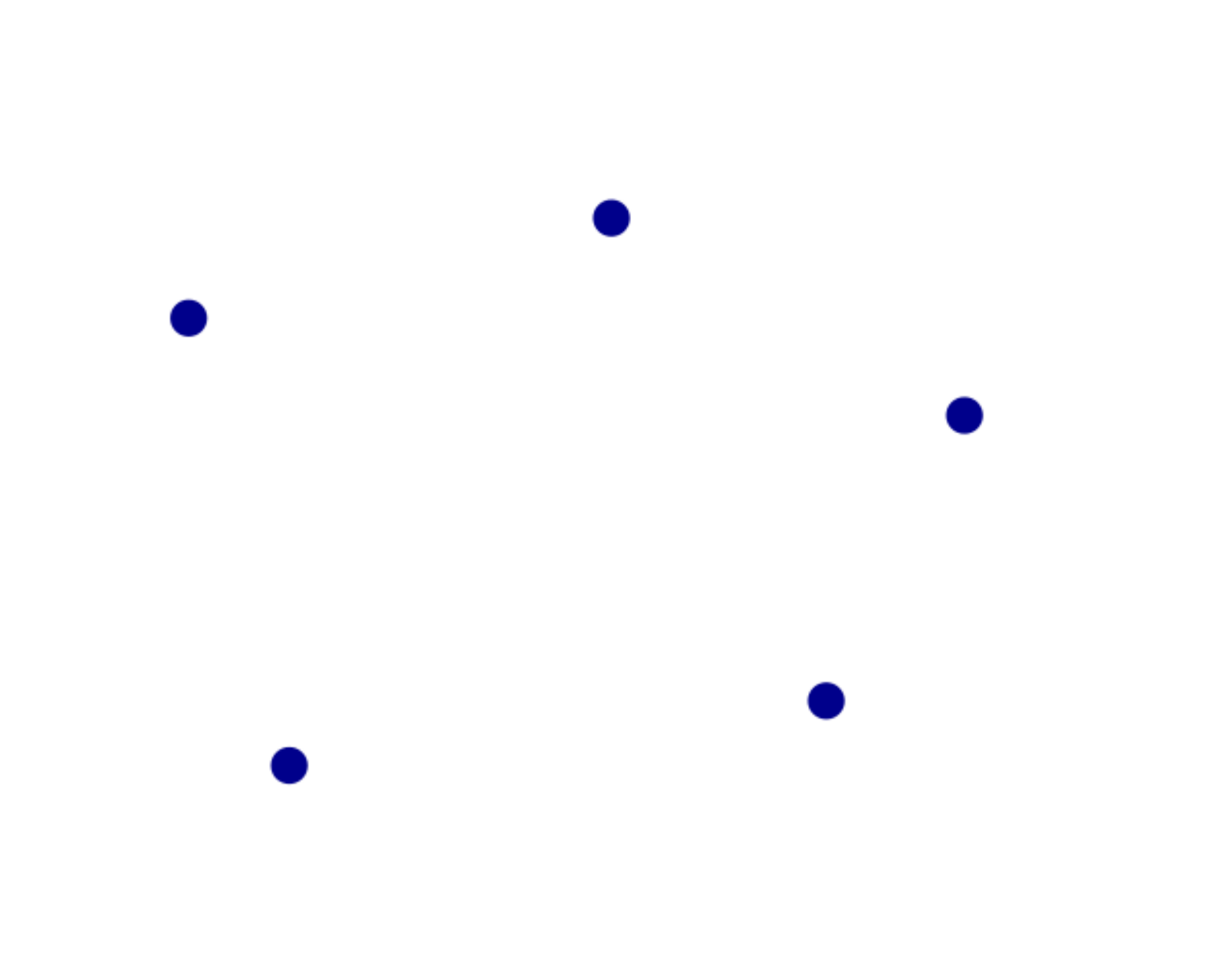}
					\begin{small}
						\put(-2,58){$A_1$}
						\put(46,70){$A_2$}
						\put(86,50){$A_3$}
						\put(10,6){$A_4$}
						\put(70,10){$A_5$}
					\end{small} 
				\end{overpic}
			} \hspace{0.5cm} & \hspace{0.5cm}
			\subfigure[][]{
			\label{figure:5plane-b}
				\begin{overpic}[width=0.25\textwidth]{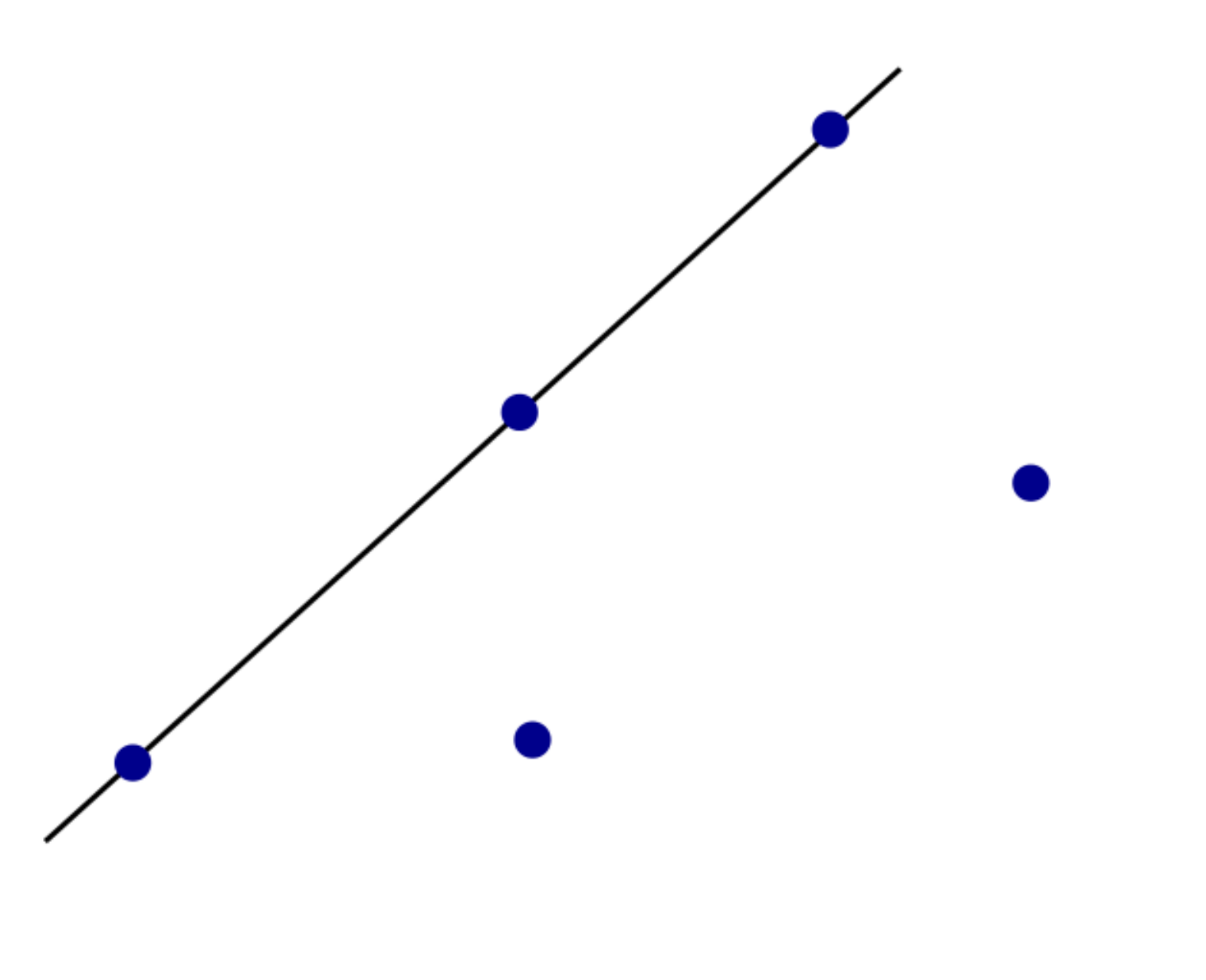}
					\begin{small}
						\put(-6,20){$A_1$}
						\put(26,50){$A_2$}
						\put(52,74){$A_3$}
						\put(44,8){$A_4$}
						\put(88,34){$A_5$}
					\end{small} 
				\end{overpic}
			} \\ [4mm] \hline
			\subfigure[][]{
			\label{figure:5plane-c}
				\begin{overpic}[width=0.25\textwidth]{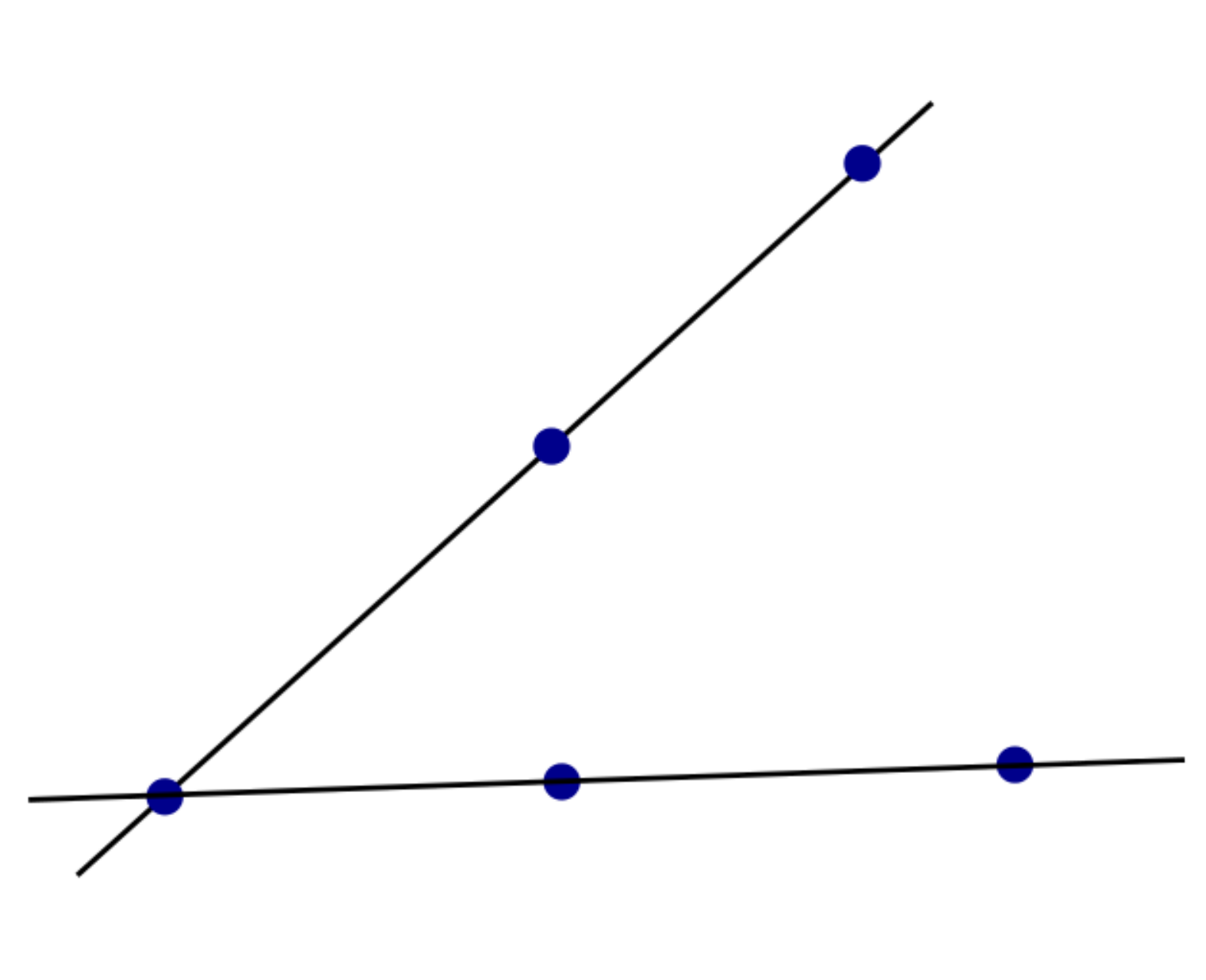}
					\begin{small}
						\put(0,20){$A_1$}
						\put(32,50){$A_2$}
						\put(58,74){$A_3$}
						\put(44,4){$A_4$}
						\put(82,5){$A_5$}
					\end{small} 
				\end{overpic}
			} \hspace{0.5cm} & \hspace{0.5cm}
			\subfigure[][]{
			\label{figure:5plane-d}
				\begin{overpic}[width=0.25\textwidth]{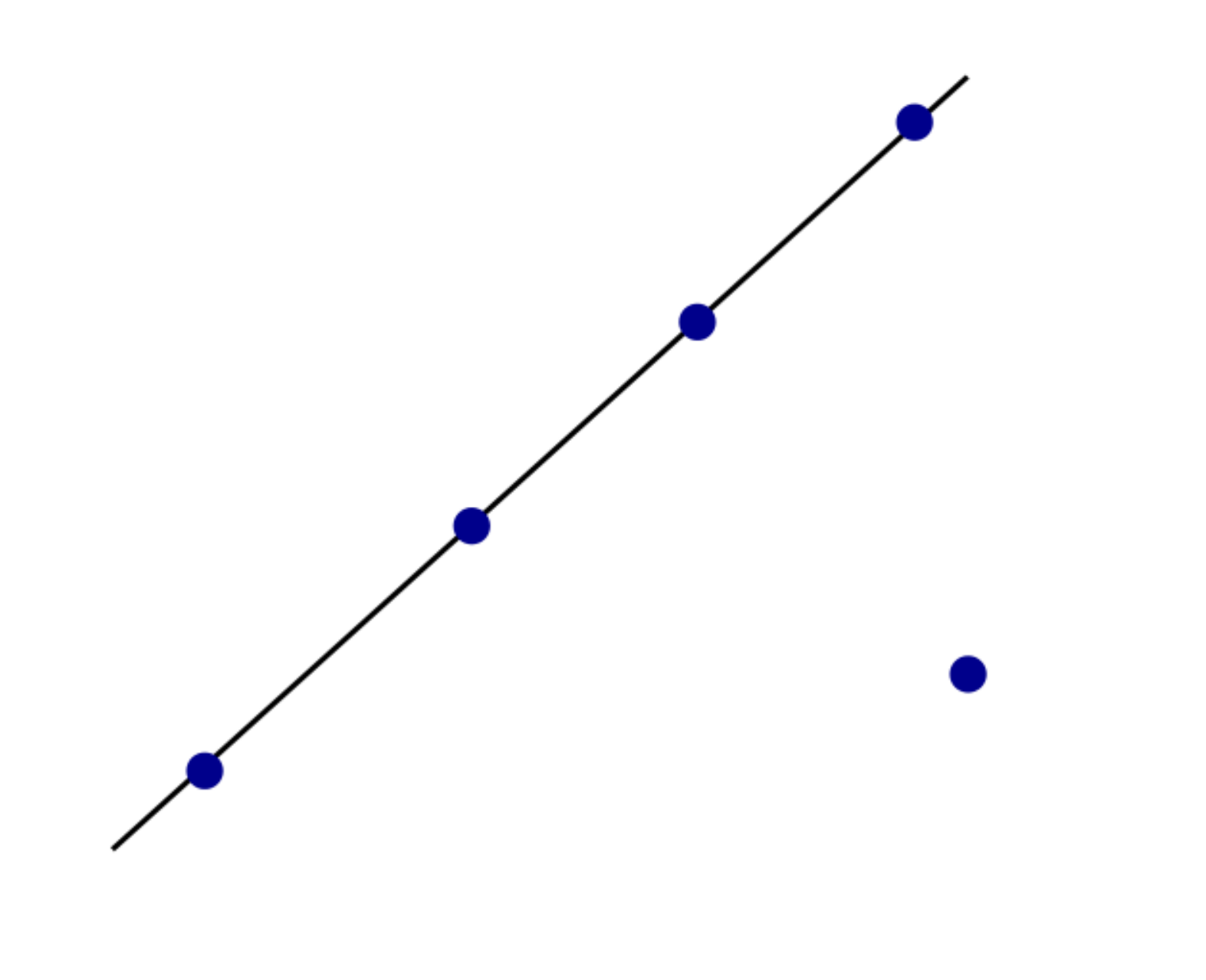}
					\begin{small}
						\put(0,20){$A_1$}
						\put(24,42){$A_2$}
						\put(43,59){$A_3$}
						\put(79,64){$A_4$}
						\put(78,14){$A_5$}
					\end{small} 
				\end{overpic}
			}
		\end{tabular}
	\caption{Possible configurations of~$5$ points in the plane: 
\subref{figure:5plane-a} no~$3$ points are aligned, \subref{figure:5plane-b} 
exactly~$3$ points are aligned, \subref{figure:5plane-c} $3+3$ points are 
collinear, \subref{figure:5plane-d} exactly~$4$ points are collinear.}
	\label{figure:5plane}
	\end{figure}
	\begin{description}
		\item[Case (a)]
			The components of~$g_{\vec{A}}$ do not have factors in common, so 
			\[ \deg{\left( g_{\vec{A}}(\p^1_{\C}) \right)} \cdot \deg{(g_{\vec{A}})} \; = \; 5 \]
			Hence either $g_{\vec{A}}$ is a birational map to a curve of degree $5$,
or it is a $5:1$ map to a line. If the image of~$g_{\vec{A}}$ were a line, then
because of Remark~\ref{remark:description_M_5} that line would coincide with
one of the~$10$ lines of~$M_5$. This would mean that whatever direction we use,
the projections of two points always coincide in any M\"obius picture, and this
is not possible. Hence this possibility must be ruled out, obtaining that
$g_{\vec{A}}$ is birational. 
		\item[Case (b)]
			Here $G_{12}$, $G_{23}$ and $G_{13}$ are equal up to scalar multiplication, so all the components have one factor in common, which can be removed. Hence 
			\[ \deg{\left( g_{\vec{A}}(\p^1_{\C}) \right)} \cdot \deg{(g_{\vec{A}})} \; = \; 4 \]
			this leading to three possibilities: $\deg{(g_{\vec{A}})} = 1$, $2$ 
or~$4$. The case when $\deg{(g_{\vec{A}})}$ is $4$ can be discarded as in 
Case~(a), so in order to prove the thesis we only have to consider the 
situation $\deg{(g_{\vec{A}})} = 2$. In this case the image of~$g_{\vec{A}}$ 
would be a conic, but from the general theory of Del Pezzo surfaces we have the 
following:

\smallskip
\noindent {\bf Claim.} $g_{\vec{A}}(\p^1_{\C})$ cannot be a conic. \\
{\it Proof.} It is well known that the surface~$M_5$ 
contains~$5$ families of conics, and every irreducible conic belongs exactly to 
one of them. These families arise in the following way: fix an index $i \in 
\{1, \ldots, 5\}$ and consider the map $M_5 \longrightarrow M_4 \cong 
\p^1_{\C}$ sending the equivalence class of $(m_1, \ldots, m_5)$ to the 
equivalence class of $(m_1, \ldots, m_{i-1}, m_{i+1}, \ldots, m_5)$,
namely we remove the~$i$-th point; the fibers of this 
map give one family of conics. From this description, recalling the definition
of the lines~$L_{ij}$ (see Remark~\ref{remark:description_M_5}), we see 
that the~$i$-th family of conics intersects only~$4$ lines, namely $L_{ij}$
for~$j \neq i$ (recall that $L_{ij} = L_{ji}$). On the other hand, by
inspecting our current situation, we see that the image $g_{\vec{A}}(\p^1_{\C})$
has to intersect the lines $L_{14}$, $L_{15}$, $L_{24}$, $L_{25}$, $L_{34}$ and 
$L_{35}$ (in general it will also intersect the line $L_{45}$, but this does
not happen if $\overrightarrow{A_1A_3}$ and $\overrightarrow{A_4A_5}$ are
parallel). Thus $g_{\vec{A}}(\p^1_{\C})$ cannot be one of the conics in~$M_5$.

\smallskip
\noindent Hence $g_{\vec{A}}$ can only be birational to a curve of 
degree~$4$.

		\item[Case (c)]
			Here $G_{12}$, $G_{23}$ and $G_{13}$ are equal up to scalar multiplication 
and the same for $G_{14}$, $G_{45}$ and $G_{15}$. One can check that all 
components have two factors in common. Thus, considering what we did in 
Case~(a), the only possible situation is the one in which $g_{\vec{A}}$ is 
birational to a curve of degree~$3$.
		\item[Case (d)]
			In this case $G_{12}$, $G_{23}$, $G_{13}$, $G_{24}$, $G_{34}$ and $G_{14}$ 
are equal up to scalar multiplication. One deduces that all components have 
three factors in common and so analogously as in Case~(c) we have that 
$g_{\vec{A}}$ is birational to a curve of degree~$2$. \qedhere
	\end{description}
\end{proof}
\begin{lemma} 
\label{lemma:5space}
	Let $\vec{A} = (A_1, \ldots, A_5)$ be a $5$-tuple of points. If the~$\{A_i\}$
are not coplanar, then the photographic map $f_{\vec{A}}: C \longrightarrow M_5$
is birational to a rational curve of degree~$10$ or~$8$ in~$M_5$.
\end{lemma}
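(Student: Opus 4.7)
The plan is to analyze directly the morphism $\tilde{f}_{\vec{A}}: \p^1_{\C} \to \p^5_{\C}$, whose six components are polynomials of degree $10$ in the parameters $(s:t)$, each a product of five linear factors of the form $\langle c, A_i - A_j \rangle$ indexed by the edges (with multiplicity) of one of the six graphs of Figure~\ref{figure:graphs}. The degree of the image curve in $M_5$ equals $(10 - d)/k$, where $d$ is the degree of the gcd of these six components and $k$ is the degree of $\tilde{f}_{\vec{A}}$ onto its image. The aim is to show $d \in \{0, 2\}$ and $k = 1$, yielding image degrees $10$ or $8$, respectively. Note first that if $\vec{A}$ is not coplanar, then at most one unordered triple of the $A_i$ can be collinear: two distinct collinear triples among five points must share a vertex, and the two lines through that shared point span a plane containing all five points.

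For the gcd analysis, a point $c_0 \in C$ corresponding to a direction $\epsilon_0$ is a common root of all six components if and only if the set $E(\epsilon_0) := \{(i,j) : A_i - A_j \parallel \epsilon_0\}$ meets the edge-set of each of the six graphs. Two edges of $E(\epsilon_0)$ sharing a vertex would force three of the $A_i$ to be collinear; hence, if no three $A_i$ are collinear, $E(\epsilon_0)$ is a matching in $K_5$ of size at most two, and a direct enumeration of the fifteen $2$-matchings against the six edge-sets shows that none of them meets all six, so $d = 0$. If three points, say $A_1, A_2, A_3$, are collinear, then for $\epsilon_0$ parallel to that line, $E(\epsilon_0) \supseteq \{(1,2), (2,3), (1,3)\}$, and one checks directly that this triangle meets every edge-set. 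The corresponding pair of conjugate points on $C$ are therefore common roots, with minimum local multiplicity equal to $1$ (attained, for instance, by the graph with double edge on $(3,4)$, whose edge-set meets $E(\epsilon_0)$ only in the single simple edge $(1,2)$); thus $d = 2$, and after cancellation the components have degree $8$.

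The main obstacle is the birationality statement $k = 1$. In the coplanar situation of Lemma~\ref{lemma:5plane}, the failure of birationality was caused by a factorization through the projection $\tau_{x,y}: C \to \p^1_{\C}$, forced by the vanishing of the third coordinate of every $A_i$; in the non-coplanar case, all three coordinates of $\p^2$ appear nontrivially in the linear forms $\langle c, A_i - A_j \rangle$, so no linear factorization of this kind exists. To rule out factorizations through an arbitrary nontrivial automorphism $\sigma$ of $C$, I would suppose $\tilde{f}_{\vec{A}}(c_1) = \tilde{f}_{\vec{A}}(c_2)$ generically for $c_2 = \sigma(c_1) \neq c_1$: this produces a M\"obius transformation $\mu$ satisfying $\mu(\langle c_1, A_i \rangle) = \langle c_2, A_i \rangle$ for every $i$, equivalently a quadric equation simultaneously satisfied by all five $A_i$. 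Letting $c_1$ range in a one-parameter family, with $\mu$ and $c_2$ varying accordingly, would yield enough independent quadric constraints on the $A_i$ to force them onto a common plane, contradicting non-coplanarity. Combining the two steps, we obtain a birational image curve of degree $10$ when no three $A_i$ are collinear, and a birational image curve of degree $8$ when exactly three are.
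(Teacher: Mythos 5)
Your analysis of the common factor is sound and agrees with the paper's: for a non-coplanar $5$-tuple at most one triple of the $A_i$ can be collinear; the set $E(\epsilon_0)$ is a matching unless such a triple exists; no $2$-matching of $K_5$ meets all six edge-sets; and when $A_1,A_2,A_3$ are collinear the triangle $\{12,23,13\}$ meets every edge-set, with minimal multiplicity one attained on the graphs with a double edge on $34$ or $45$. So the reduced components have degree $10$ or $8$, exactly as in the paper (which phrases this as zero or one common linear factor in $x,y,z$, i.e.\ degree $0$ or $2$ on $C\cong\p^1_{\C}$).

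The gap is in the birationality step, which is the real content of the lemma. First, a $k:1$ map need not factor through an automorphism of $C$ unless the cover is Galois, so your reduction to ``a nontrivial automorphism $\sigma$'' only handles $k=2$ (where a deck involution exists automatically after normalizing the image); the possibilities $k=5,10$ (resp.\ $4,8$), i.e.\ image a conic or a line of $M_5$, must be excluded separately --- the paper does this by noting that the ten lines of $M_5$ are the $L_{ij}$ (so a line image would force two projected points to coincide for every direction) and that every conic on the degree-$5$ Del Pezzo surface meets only four of the lines $L_{ij}$, whereas the image here must meet at least six. Second, and more seriously, your treatment of $k=2$ rests on the unproved assertion that the quadric conditions $c\,\langle c_1,A_i\rangle\langle c_2,A_i\rangle+d\,\langle c_2,A_i\rangle-a\,\langle c_1,A_i\rangle-b=0$, as $c_1$ varies in a pencil, ``force the points onto a common plane.'' Five points in $\R^3$ always lie on a linear system of quadrics of dimension at least four, so the mere existence of a one-parameter family of quadrics through them is no constraint whatsoever; one would have to exploit the very special shape of these quadrics (quadratic part of rank two, coefficients tied to the unknown $\mu$ and $c_2$), and nothing in your sketch does so. The paper instead argues via the real structure: the deck involution $r_{\vec{A}}$ of a $2:1$ real map is a real automorphism of $C\cong S^2$, hence a $180^{\circ}$ rotation about an axis; since $f_{\vec{A}}^{-1}(L_{ij})=\bigl\{\pm(A_i-A_j)/\|A_i-A_j\|\bigr\}$ must be $r_{\vec{A}}$-invariant, each direction $A_i-A_j$ is either one of the two fixed points of the rotation or lies on the great circle where $r_{\vec{A}}$ acts antipodally, and a short case analysis forces all these directions into one great circle, i.e.\ the points into a plane. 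Unless you carry out the quadric analysis in full, this real-geometric argument is the missing ingredient.
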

\begin{proof}
	We argue as in the proof of Lemma~\ref{lemma:5plane}: if we write~$H_{ij}$ for
the linear polynomial $x (p_i - p_j) + y (q_i - q_j) + z (r_i - r_j)$, then the
components of $f_{\vec{A}}$ have the same structure as described by 
Equation~(\ref{equation:structure_plane_g}), where we replace~$G_{ij}$ 
by~$H_{ij}$. Since the~$\{A_i\}$ are not coplanar, we can have only three 
possibilities (after a possible relabeling of the points), showed in 
Figure~\ref{figure:5space}.
	\begin{figure}[ht!]
		\begin{tabular}{c|c|c}
			\subfigcapskip = 6pt
			\subfigure[][]{
			\label{figure:5space-a}
				\begin{overpic}[width=0.28\textwidth]{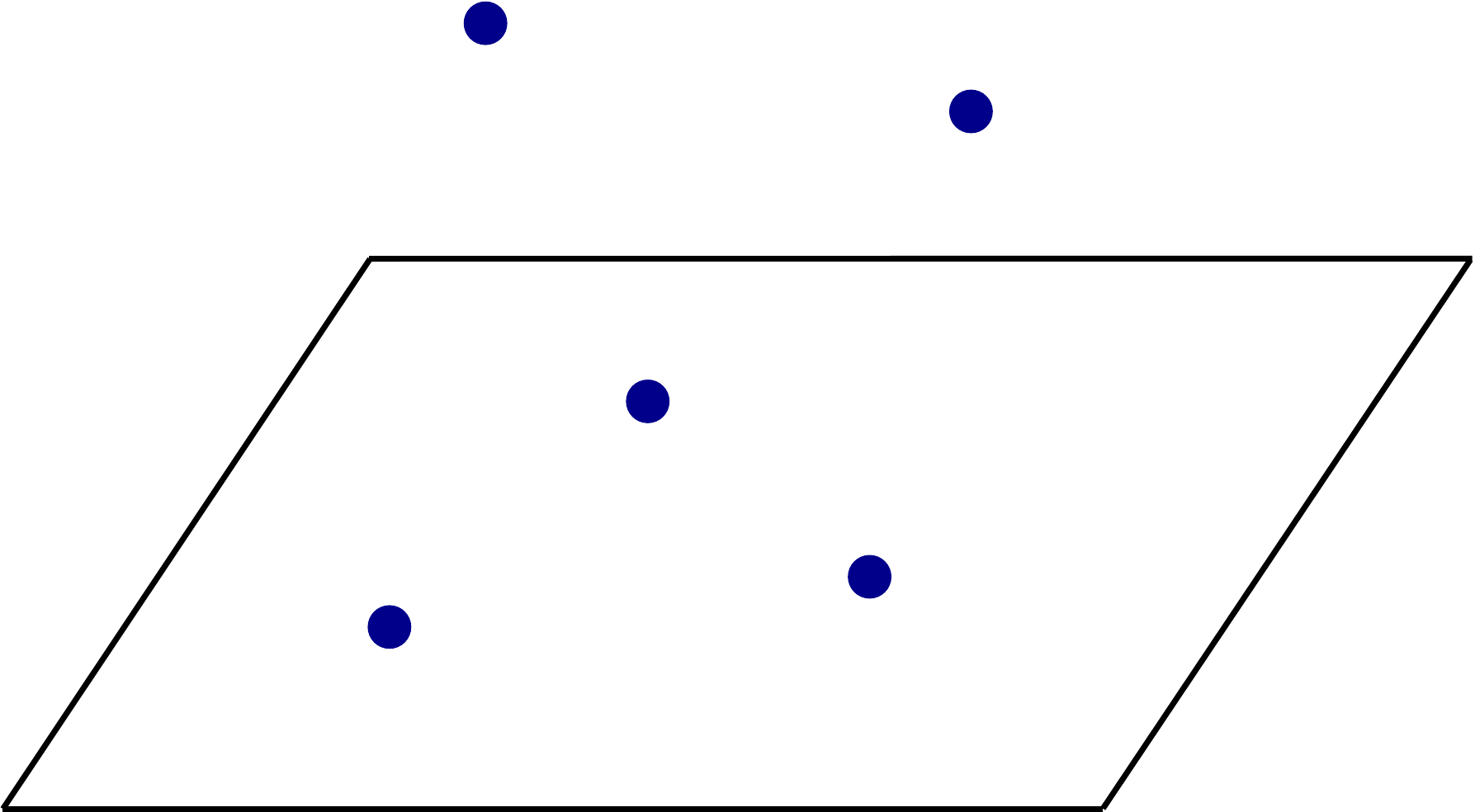}
					\begin{small}
						\put(12,4){$A_1$}
						\put(26,26){$A_2$}
						\put(58,6){$A_3$}
						\put(70,48){$A_4$}
						\put(16,54){$A_5$}
					\end{small} 
				\end{overpic}
			} &
			\subfigcapskip = 6pt
			\subfigure[][]{
			\label{figure:5space-b}
				\begin{overpic}[width=0.28\textwidth]{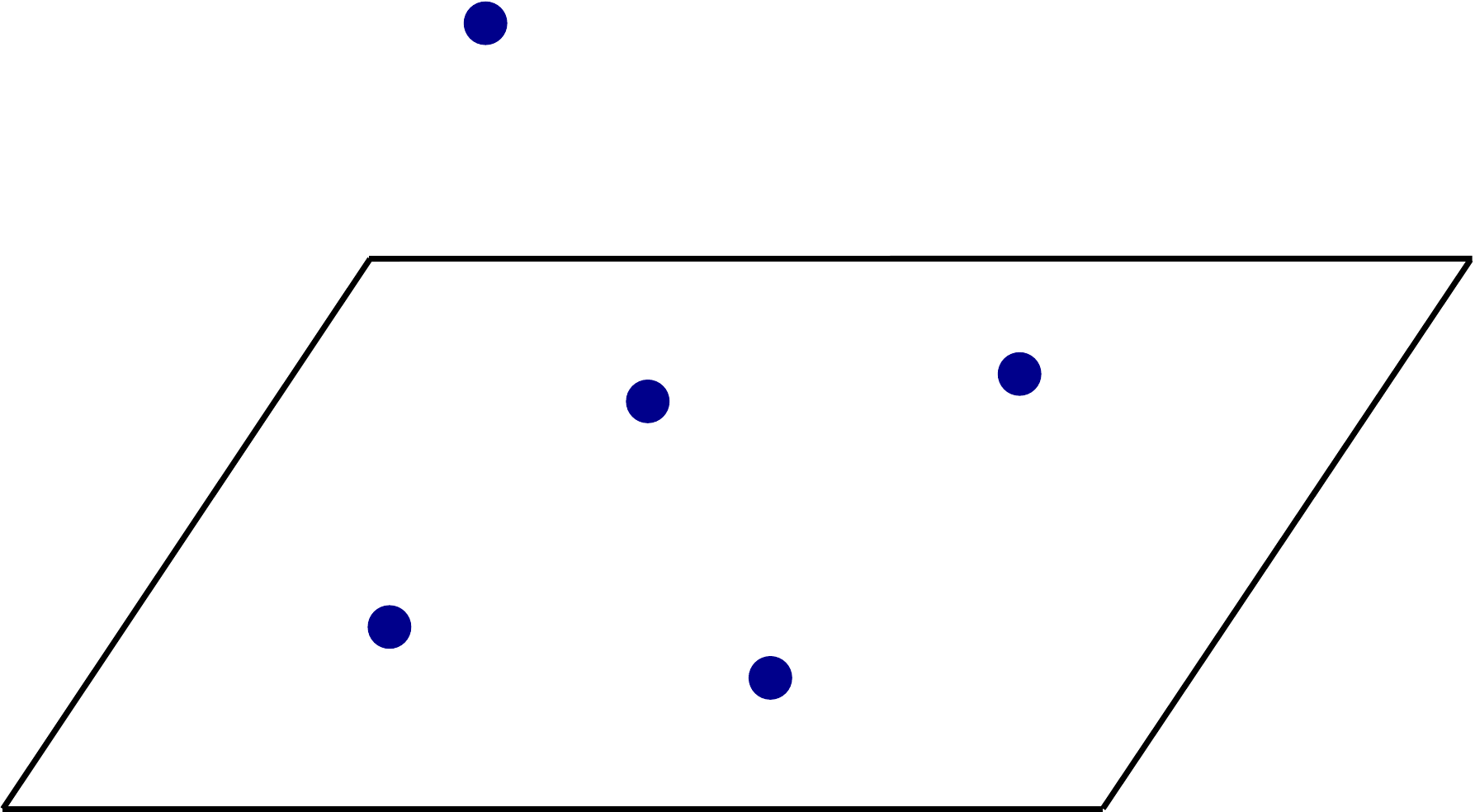}
					\begin{small}
						\put(12,4){$A_1$}
						\put(26,26){$A_2$}
						\put(56,4){$A_3$}
						\put(74,26){$A_4$}
						\put(16,54){$A_5$}
					\end{small} 
				\end{overpic}
			} &
			\subfigcapskip = 6pt
			\subfigure[][]{
			\label{figure:5space-c}
				\begin{overpic}[width=0.28\textwidth]{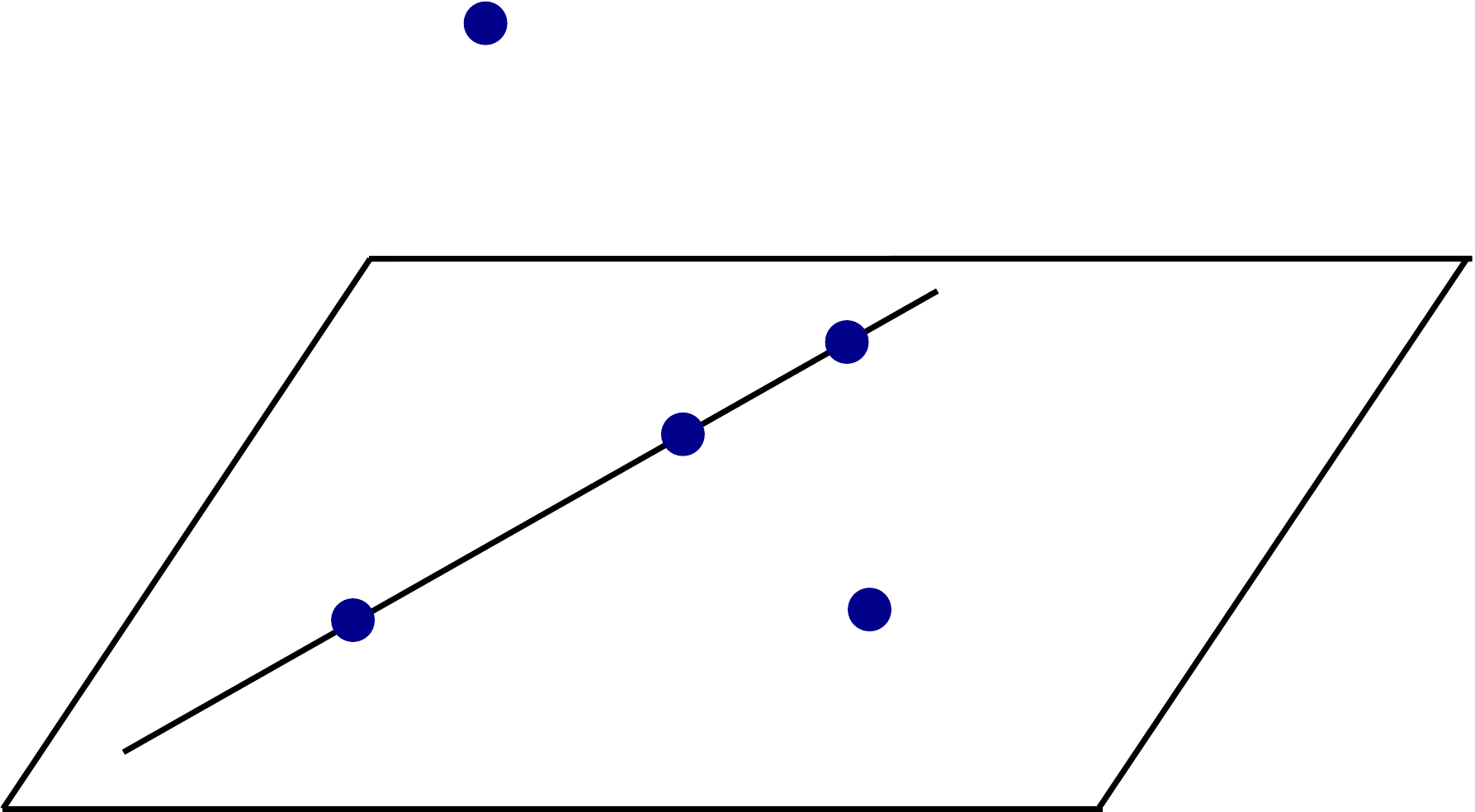}
					\begin{small}
						\put(24,4){$A_1$}
						\put(28,26){$A_2$}
						\put(60,26){$A_3$}
						\put(58,4){$A_4$}
						\put(16,54){$A_5$}
					\end{small} 
				\end{overpic}
			} 
		\end{tabular}
	\caption{Possible configurations of~$5$ non coplanar points in the space: 
\subref{figure:5space-a} no~$4$ points are coplanar, \subref{figure:5space-b} 
$4$ coplanar points, no~$3$ of them aligned, \subref{figure:5space-c} $3$ 
aligned points.}
	\label{figure:5space}
	\end{figure}
	\begin{description}
		\item[Case (a/b)]
			In this situation the components 
$\left( f_{\vec{A}} \right)_i$ of $f_{\vec{A}}$ do not have any common factor, 
hence either $f_{\vec{A}}$ is a birational map with image a degree~$10$ curve, 
or $f_{\vec{A}}$ is $2:1$ to a curve of degree~$5$. We prove that in the second 
case the points should be coplanar, so this cannot happen. If we suppose that
the map~$f_{\vec{A}}$ is $2:1$, we have the following:

\medskip
\noindent {\bf Claim.} It is possible to define a regular map $r_{\vec{A}}: C 
\longrightarrow C$ which respects the real structure on~$C$ and such that 
\[ r_{\vec{A}}^2 = \mathrm{id} \quad \mathrm{and} \quad 
 f_{\vec{A}}\left(r_{\vec{A}}(\varepsilon)\right) = f_{\vec{A}}(\varepsilon) \]
{\it Proof.} Suppose in fact that we are given a finite 
map $f: C \longrightarrow D$ where $C$ is a smooth curve and $f$ is generically 
$2:1$. If $\tilde{D}$ is the normalization of~$D$, we can lift~$f$ to a finite 
map $\tilde{f}: C \longrightarrow \tilde{D}$ which is also generically $2:1$. 
Then we define set-theoretically an involution $r: C \longrightarrow C$ in the 
following way: pick a point $P \in C$; in particular, $P$ is a prime divisor 
of~$C$, so we map it to $f^{*} \! \left( f_{*} (P) \right) - P$\footnote{Here
$f_{*}: \mathrm{Div}(C) \longrightarrow \mathrm{Div}(D)$ and $f^{*}:
\mathrm{Div}(D) \longrightarrow \mathrm{Div}(C)$ denote respectively the
\emph{pushforward} and the \emph{pullback} induced by $f$ between the
groups of divisors of the curves~$C$ and~$D$. For definitions and properties 
of these notions see, for example,~\cite{Hartshorne} Appendix A.}, which is 
also a prime divisor of~$C$, namely a point (we passed to the normalization in 
order to have good functorial properties of divisors; generically this map swaps 
the two elements in a fiber of~$f$). In order to prove that this map is 
regular, since the map~$f$ is generically $2:1$ we can suppose that locally it 
is given by the canonical injection $R \longrightarrow \faktor{R[x]}{(x^2 + b x 
+ c)} = S$, where $\mathrm{Spec}(R)$ is an open set in~$\tilde{D}$ and 
$\mathrm{Spec}(S)$ is an open set in~$C$. Hence $r$ is locally given by the 
homomorphism $S \longrightarrow S$ sending $x \mapsto -b -x$, which exchanges 
the two roots of $x^2 + b x + c$. In this way we see that $r$ is regular. 
Moreover, if~$C$ is a real variety and~$f$ is a real map, then also~$r$ is a 
real map. 

\medskip \noindent
If we think of~$C$ as the unit sphere~$S^2$, because of its properties
$r_{\vec{A}}$ has to be a rotation of~$S^2$ of $180^{\circ}$ along an 
axis, which also proves that $r_{\vec{A}}$ has two fixed points (the 
intersections of~$S^2$ with the axis of rotation). Recall the definition of
the lines~$L_{ij}$ in~$M_5$ (see Remark~\ref{remark:description_M_5}).
Then we get that 
\[ f_{\vec{A}}^{-1}(L_{ij}) \; = \; \left\{ \frac{A_i - A_j}{\left\| A_i -
A_j \right\|}, \frac{A_j - A_i}{\left\| A_i - A_j \right\|} \right\} \] 
On the other hand, if $\varepsilon \in f_{\vec{A}}^{-1}(L_{ij})$, then
also $r_{\vec{A}}(\varepsilon) \in f_{\vec{A}}^{-1}(L_{ij})$, so there are only
two options:
\begin{itemize}
	\item[i.]
		either $r_{\vec{A}}(\varepsilon) = -\varepsilon$, meaning that
$\varepsilon$ lies on a great circle of~$S^2$ (the one orthogonal to the axis
determined by $r_{\vec{A}}$) since $r_{\vec{A}}$ coincides with the antipodal
map only on this great circle;
	\item[ii.]
		or $r_{\vec{A}}(\varepsilon) = \varepsilon$, meaning that $\varepsilon$
is one of the two fixed points of $r_{\vec{A}}$.
\end{itemize}
			If possibility i. happens for every $L_{ij}$, this means that the
direction of all lines~$\overrightarrow{A_iA_j}$ lie on a great circle of~$S^2$, 
this implying that the points~$\{A_i\}$ are coplanar. If this were not
the case, since in our configuration no three~$A_i$ are collinear in
this case we have that possibility ii. can happen only for one line~$L_{ij}$.
Let us suppose that this line is~$L_{12}$: this would imply that the points
$A_2$, $A_3$, $A_4$ and $A_5$ are coplanar (in Case~(a) here we would have 
already reached a contradiction) and the line~$\overrightarrow{A_1A_2}$ is
orthogonal to the plane on which the other points lie. On the other hand, the
fact that all lines but $L_{12}$ fall on possibility i. implies that also $A_1$,
$A_2$, $A_3$ and $A_4$ are coplanar. Hence all points are coplanar. But this is
in contradiction with our assumption that the points~$\{A_i\}$ are not
coplanar. \\
We have shown that in this case the only situation which is left possible
is that the map~$f_{\vec{A}}$ is birational to a degree~$10$ curve.
		\item[Case (c)]
			Here we have that $H_{12}$, $H_{23}$ and $H_{13}$ are equal up to a scalar
factor, so the components of~$f_{\vec{A}}$ have one factor in common, which can
be removed. Thus four situations are possible: either $f_{\vec{A}}$ is
birational to a curve of degree~$8$, or it is $2:1$ to a curve of degree~$4$, or
it is $4:1$ to a conic, or it is $8:1$ to a line. Arguments similar to the ones
of Case~(a) in the proof of Lemma~\ref{lemma:5plane} rule out the last two
situations. In order to prove that the~$2:1$ situation is not possible, we
proceed as in Case~(a): here the image~$f_{\vec{A}}(C)$ does not meet all the
lines~$L_{ij}$, but from the configuration of the points $A_i$ it is ensured
that the curve intersects $L_{14}$, $L_{24}$, $L_{34}$, $L_{15}$, $L_{25}$,
$L_{35}$ and $L_{45}$, which is enough to prove that the points are coplanar.
\qedhere
	\end{description}
\end{proof}
\begin{remark}
\label{remark:constant_map}
	We notice that if all the~$5$ points of~$\vec{A}$ are aligned, then all 
the~$6$ components of the photographic map are proportional, hence $f_{\vec{A}}$ 
is a constant map.
\end{remark}
\begin{remark}
	Lichtblau stated the following conjecture (see Conjecture~2 
of~\cite{Lichtblau}; this was later proved by him in~\cite{Lichtblau2012}, 
Proposition~3 and Theorem~4): \emph{there can only exist an infinite number of 
cylinders of revolution passing through five distinct points in $\R^3$ if the 
points are located on two parallel lines}. Under the assumption that infinitely 
many circular cylinders are real we can use Lemma~\ref{lemma:5space} to give 
an alternative proof to the one of Lichtblau. In fact, if a $5$-tuple has 
such a property, then the image of its photographic map will have infinitely 
many real points (see Remark~\ref{remark:real_points_M_5}). Since $S^2$ does 
not have real points, it follows that the photographic map cannot be 
birational, hence the points have to be coplanar. From this it is well known 
that the points actually have to lie on two parallel lines. Therefore only the 
question if this condition is also necessary for the existence of infinitely 
many circular cylinders over~$\C$ passing through five real distinct points 
remains open.
\end{remark}
Now we state and prove the main result of this section.
\begin{theorem} 
\label{theorem:photo}
	Let $\vec{A}$ and $\vec{B}$ be two $5$-tuples of points in $\R^3$ such that
no~$4$ of them are collinear. Assume that $f_{\vec{A}}(C)$ and 
$f_{\vec{B}}(C)$ are equal as curves in~$M_5$.
	If $\vec{A}$ is coplanar, then $\vec{B}$ is also coplanar and affine
equivalent to~$\vec{A}$.
	If~$\vec{A}$ is not coplanar, then $\vec{B}$ is similar to~$\vec{A}$.
\end{theorem}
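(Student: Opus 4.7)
The plan is to combine the degree computations of Lemmas~\ref{lemma:5plane} and \ref{lemma:5space}, the identification of the preimages $f_{\vec{A}}^{-1}(L_{ij})$ with the unit directions along $A_i - A_j$, and a propagation argument on these directions, in order to reconstruct $\vec{B}$ from $\vec{A}$ up to the claimed equivalence. First, since no four of the $A_i$ are collinear, Lemma~\ref{lemma:5plane} gives $\deg f_{\vec{A}}(C) \in \{3,4,5\}$ when $\vec{A}$ is coplanar, while Lemma~\ref{lemma:5space} gives $\deg f_{\vec{A}}(C) \in \{8,10\}$ otherwise. Equality of the two image curves forces the degrees to agree, so $\vec{B}$ is coplanar exactly when $\vec{A}$ is, and the proof splits into two cases.

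In the \emph{non-coplanar} case, both maps are birational onto the common image $\Gamma$, so $\rho := f_{\vec{A}}^{-1} \circ f_{\vec{B}}$ extends to an automorphism of the smooth rational curve $C$. Since each photographic map is defined by real polynomials, $\rho$ commutes with the antipodal real structure on $C$, and real automorphisms of $C$ correspond under $\gamma$ to rotations $R \in \SO$. Replacing $\vec{B}$ by $R \cdot \vec{B}$ leaves the image curve unchanged and forces $f_{\vec{A}} = f_{\vec{B}}$, so the preimages of every $L_{ij}$ coincide on the two sides. By the computation in the proof of Lemma~\ref{lemma:5space}, this preimage is the antipodal pair of unit directions along $A_i - A_j$ on one hand and along $B_i - B_j$ on the other, forcing $B_i - B_j = \lambda_{ij}(A_i - A_j)$ for some $\lambda_{ij} \in \R \setminus \{0\}$. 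The identity $\lambda_{ik}(A_i - A_k) = \lambda_{ij}(A_i - A_j) + \lambda_{jk}(A_j - A_k)$, combined with linear independence of $A_i - A_j$ and $A_j - A_k$ whenever the three points are non-collinear, forces $\lambda_{ij} = \lambda_{ik} = \lambda_{jk}$; since the no-four-collinear hypothesis guarantees that every pair of indices lies in such a triangle, a connectivity argument collapses all $\lambda_{ij}$ to a single scalar $\lambda$. Hence $\vec{B} = R^{-1}(\lambda \vec{A} + c)$ for some translation $c \in \R^3$, i.e., $\vec{B}$ is similar to $\vec{A}$.

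In the \emph{coplanar} case, change coordinates so that $\vec{A}$ lies in the plane $\{z=0\}$ and rotate $\vec{B}$ so that it too lies in $\{z=0\}$; this does not affect $f_{\vec{B}}(C)$. Both maps now factor through the projection $\tau_{x,y} : C \to \p^1_{\C}$ as $f_{\vec{A}} = g_{\vec{A}} \circ \tau_{x,y}$ and $f_{\vec{B}} = g_{\vec{B}} \circ \tau_{x,y}$, with $g_{\vec{A}}, g_{\vec{B}}$ birational onto $\Gamma$ by Lemma~\ref{lemma:5plane}. The composition $\sigma := g_{\vec{A}}^{-1} \circ g_{\vec{B}}$ is then a M\"obius automorphism of $\p^1_{\C}$ preserving the real structure pushed down from $C$ --- whose real locus is $\p^1_{\R}$, parametrising real directions in the $xy$-plane --- so $\sigma \in PGL_2(\R)$ is induced by some $M \in GL_2(\R)$. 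The point $g_{\vec{A}}^{-1}(L_{ij}) \in \p^1_{\R}$ encodes the projective direction of $A_i - A_j$ in the plane, and $g_{\vec{A}} \circ \sigma = g_{\vec{B}}$ translates into $M(B_i - B_j) \parallel (A_i - A_j)$. The same propagation argument yields $M(B_i - B_j) = \mu(A_i - A_j)$ for one scalar $\mu \ne 0$, exhibiting $A_i \mapsto B_i$ as an affine bijection of the $xy$-plane; together with the rotation used to align the planes, this gives an affine equivalence in $\R^3$.

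The main obstacle I anticipate is verifying that $\rho$ and $\sigma$ are defined over $\R$, so that they lift to $\SO$ and to $GL_2(\R)$ respectively: without this reality, one cannot pass from an abstract automorphism of $C$ or $\p^1_{\C}$ to a genuine geometric transformation of $\R^3$ or the plane. A subordinate subtlety is that the hypothesis permits three collinear points among the $A_i$, so the propagation producing a single scalar must proceed via non-degenerate triangles; this is automatic, since any line through two of the $A_i$'s contains at most one additional $A_k$, so every pair of indices sits in such a triangle.
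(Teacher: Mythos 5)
Your proposal is correct and follows essentially the same route as the paper: split by degree using Lemmas~\ref{lemma:5plane} and~\ref{lemma:5space}, use birationality (resp.\ the $2{:}1$ factorization through $\tau_{x,y}$) and the real structure to produce a rotation (resp.\ a real matrix $M$) making the two photographic maps equal, and then read off the directions $A_i-A_j$ from the preimages of the lines $L_{ij}$. The only point to keep explicit is the one you already flag at the end: in the degree~$8$ (resp.\ degree~$4$ or~$3$) cases the curve misses the lines $L_{ij}$ indexed by a collinear triple, so for those pairs the relation $B_i-B_j=\lambda(A_i-A_j)$ must be recovered from the remaining pairs via your propagation identity rather than directly from $f^{-1}(L_{ij})$.
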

\begin{proof}
	Suppose that $\vec{A}$ is not coplanar. Then by Lemma~\ref{lemma:5space} we
know that $f_{\vec{A}}$ is birational to a curve of degree $10$ or $8$. From
Lemma~\ref{lemma:5plane} we have that $\vec{B}$ is also not coplanar, since
otherwise we would have a curve of different degree as the image 
of~$f_{\vec{B}}$. Thus $f_{\vec{B}}$ is birational, and by composing
$\tilde{f}_{\vec{A}}$ and $\tilde{f}_{\vec{B}}^{-1}$ we get an isomorphism
$\rho: \p^1_{\C} \stackrel{\cong}{\longrightarrow} \p^1_{\C}$ which respects the
real structure, since both $\tilde{f}_{\vec{A}}$ and $\tilde{f}_{\vec{B}}$ do
so. Thus $\rho$ is a rotation of~$S^2$. If we apply the rotation~$\rho$ 
to~$\vec{A}$ we obtain a vector of points~$\vec{A'}$ so that the diagram
	\[ \xymatrix{& D \\ \p^1_{\C} \ar[ru]^{\tilde{f}_{\vec{A'}}}
\ar[rr]_{\mathrm{id}} & & \p^1_{\C} \ar[lu]_{\tilde{f}_{\vec{B}}} }\]
	commutes, namely $\tilde{f}_{\vec{A}}$ and $\tilde{f}_{\vec{B}}$ coincide as 
maps. The goal now is to show that the direction~$\overrightarrow{A'_iA'_j}$ 
and $\overrightarrow{B_iB_j}$ coincide for every~$i$ and~$j$, this proving that 
$\vec{A'}$ and $\vec{B}$ are similar, from which we derive the thesis. Let us 
consider the situation when~$D$ has degree~$10$. Recall the definition of the
lines~$L_{ij}$ in~$M_5$ (see Remark~\ref{remark:description_M_5}). Analogously
as in the proof of Lemma~\ref{lemma:5space} we have that
	\[ f_{\vec{A'}}^{-1}(L_{ij}) \; = \;
\left\{ \frac{A'_i - A'_j}{\left\| A'_i - A'_j \right\|}, \frac{A'_j -
A'_i}{\left\| A'_i - A'_j \right\|} \right\} \]
	and similarly for~$f_{\vec{B}}^{-1}(L_{ij})$. Since the two maps
$\tilde{f}_{\vec{A'}}$ and $\tilde{f}_{\vec{B}}$ coincide our claim is proved.
In the situation when $D$ has degree~$8$ the argument is the same, but in this
case $D$ does not intersect all the lines~$L_{ij}$; however, knowing that
$f_{\vec{A'}}^{-1}(D \cap L_{ij})$ and $f_{\vec{B}}^{-1}(D \cap L_{ij})$ are
equal for $ij \in \{14, 24, 34, 15, 25, 35, 45 \}$ (see Case~(c) of 
Lemma~\ref{lemma:5space}) gives already enough information for proving that 
$\vec{A'}$ and $\vec{B}$ are similar.

	\smallskip
	Suppose that $\vec{A}$ is coplanar, then from Lemma~\ref{lemma:5plane} the 
map~$f_{\vec{A}}$ is $2:1$ to a curve of degree~$5$, $4$ or~$3$ (we avoid the 
conic case, since no~$4$ points are collinear by hypothesis; the reason for this
is clarified in Remark~\ref{remark:reconstruction_four_collinear}). Hence the
only possibility is that also $\vec{B}$ is coplanar, because otherwise from
Lemma~\ref{lemma:5space} we would get a curve of degree~$10$ or~$8$ as the image
of~$f_{\vec{B}}$. As in the proof of Lemma~\ref{lemma:5plane}, we know that both
$f_{\vec{A}}$ and $f_{\vec{B}}$ factor through a $2:1$ map to~$\p^1_{\C}$
followed by a birational map. By a change of coordinates we can suppose that 
this $2:1$ map is given by sending $(x:y:z) \mapsto (x:y)$. The picture of
the situation is:
	\[ \xymatrix{& & D & & \\ C \ar[r]_-{2:1} \ar[rru]^-{f_{\vec{A}}} & \p^1_{\C} \ar[ru]_-{\cong} & & \p^1_{\C} \ar[lu]^-{\cong} & C \ar[l]^-{2:1} \ar[llu]_-{f_{\vec{B}}} } \]
	Thus we get an isomorphism $\p^1_{\C} \stackrel{\cong}{\longrightarrow} \p^1_{\C}$ which makes the previous diagram commute. If $M$ is the invertible $2 \times 2$ matrix representing it, and we denote by $A'$ the vector of points obtained by applying the affinity associated to $M$ to $\vec{A}$, then the following diagram commutes:
	\[ \xymatrix@R=.4pc{& D & \\ \\ \\ C \ar[r] \ar[ruuu]^-{f_{\vec{A'}}} & \p^1_{\C} \ar[uuu] & C \ar[l] \ar[luuu]_-{f_{\vec{B}}} \\ \scriptstyle (x:y:z) \ar@{|->}[r] & \scriptstyle (x:y) & \scriptstyle (x:y:z) \ar@{|->}[l]} \]
	In this way we reached the point where $f_{\vec{A'}}$ and $f_{\vec{B}}$ are equal as maps, thus we can proceed as in the non planar case, proving that $A'$ and $B$ are similar, so $\vec{A}$ and $\vec{B}$ are affine equivalent. 
\end{proof}
\begin{remark}
	We can describe an algorithm which takes as an input the image of the
photographic map of a vector of points~$\vec{A}$ satisfying the conditions of
Theorem~\ref{theorem:photo} and gives back a vector of points~$\vec{C}$ which is
similar to~$\vec{A}$. In Algorithm~1 we describe
the procedure in the case of non planar points, when the degree of~$D$ is~$10$.
This is the easiest situation, because we have information about all the
directions of the lines passing through the points of~$\vec{A}$.
	\begin{algorithm}
	\caption{Non planar point reconstruction}
		\begin{algorithmic}[1]
			\Require $D \subseteq M_5$, a degree~$10$ curve such that 
$f_{\vec{A}}(S^2) = D$.
			\Ensure $\vec{C}$ such that it is similar to $\vec{A}$.
			\Statex
			\State {\bfseries Parametrize} $D$ via $\varphi$ respecting the real
structure of $D$.
			\State {\bfseries Compute} $\big\{ \varepsilon_{ij}, -\varepsilon_{ij}
\big\} = \varphi^{-1}(L_{ij})$ for all $i,j$.
			\State {\bfseries Set} $C_1 = (0,0,0)$.
			\State {\bfseries Pick} $C_2$ arbitrary on the line $\big\{ C_1 + t \tth \varepsilon_{12} \, : \; t \in \R \big\}$.
			\State {\bfseries Construct} $C_3$ as the intersection of the lines $\big\{ C_1 + t \tth \varepsilon_{13} \big\}$ and $\big\{ C_2 + t \tth \varepsilon_{23} \big\}$.
			\State {\bfseries Construct} $C_4$ using $\varepsilon_{24}$ and
$\varepsilon_{34}$ as in Step~5.
			\State {\bfseries Construct} $C_5$ using $\varepsilon_{35}$ and
$\varepsilon_{45}$ as in Step~5.
			\State \Return $\vec{C} = (C_1, \ldots, C_5)$.
		\end{algorithmic}
	\end{algorithm}

	We notice that we can always perform Steps~$5$, $6$ and~$7$, namely, the 
involved lines always intersect. This is ensured by the fact that we 
start from an existing configuration of points.

	When the curve~$D$ has degree~$8$, $5$, $4$ or~$3$ the algorithm is almost 
the same, we just have to take into account that $D$ will not intersect all the 
lines $L_{ij}$: the ones which are disjoint from the image of~$f_{\vec{A}}$ 
reveal which points in~$\vec{C}$ will be collinear, and the others can be 
used to identify the whole configuration. 
\end{remark}
\begin{remark}
\label{remark:reconstruction_four_collinear}
	We notice that we have to avoid the case when~$4$ points are collinear
(namely when the degree of the image of the photographic map is~$2$), because in
that case it is not possible to reconstruct the direction
$\overrightarrow{A_1A_4}$. In fact, $f_{\vec{A}}(C) \cap L_{14} = \emptyset$
since projecting in that direction would give a configuration where four points
coincide, which is not allowed in~$M_5$. In this case one can show that the
images of two photographic maps $f_{\vec{A}}(C)$ and $f_{\vec{B}}(C)$ are equal
if and only if the cross ratios of the two $4$-tuples of collinear points are
equal. On the other hand, also when we only have three aligned points the image
of the photographic map does not intersect the line~$L_{13}$, but in this case
we can reconstruct the whole configuration regardless the knowledge 
of~$\overrightarrow{A_1A_3}$, since we can use $\overrightarrow{A_1A_4}$ and
$\overrightarrow{A_1A_5}$ to determine $A_1$ starting from $A_4$ and $A_5$, and
do the same for $A_2$ and $A_3$ --- this procedure cannot be applied to the
previous configuration. The two situations are described in 
Figure~\ref{figure:reconstruction}.
	\begin{figure}[ht]
		\begin{tabular}{cc}
			\subfigcapskip = 6pt
			\subfigure[][]{
			\label{figure:four_collinear}
				\begin{overpic}[width=0.3\textwidth]{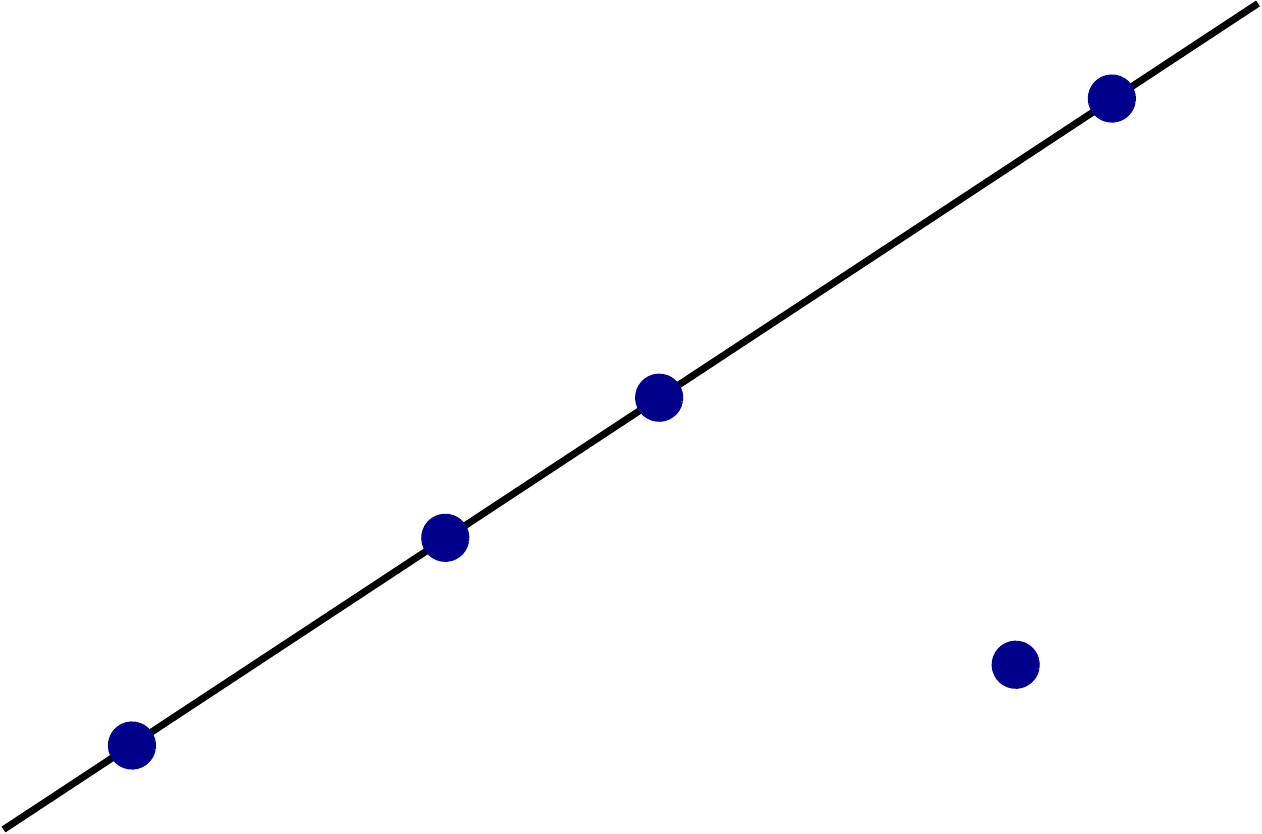}
					\begin{small}
						\put(0,11){$A_1$}
						\put(25,27){$A_2$}
						\put(41,38){$A_3$}
						\put(77,62){$A_4$}
						\put(82,8){$A_5$}
					\end{small} 
				\end{overpic}
			} \hspace{0.2cm} & \hspace{0.2cm}
			\subfigcapskip = 6pt
			\subfigure[][]{
			\label{figure:three_collinear}
				\begin{overpic}[width=0.3\textwidth]{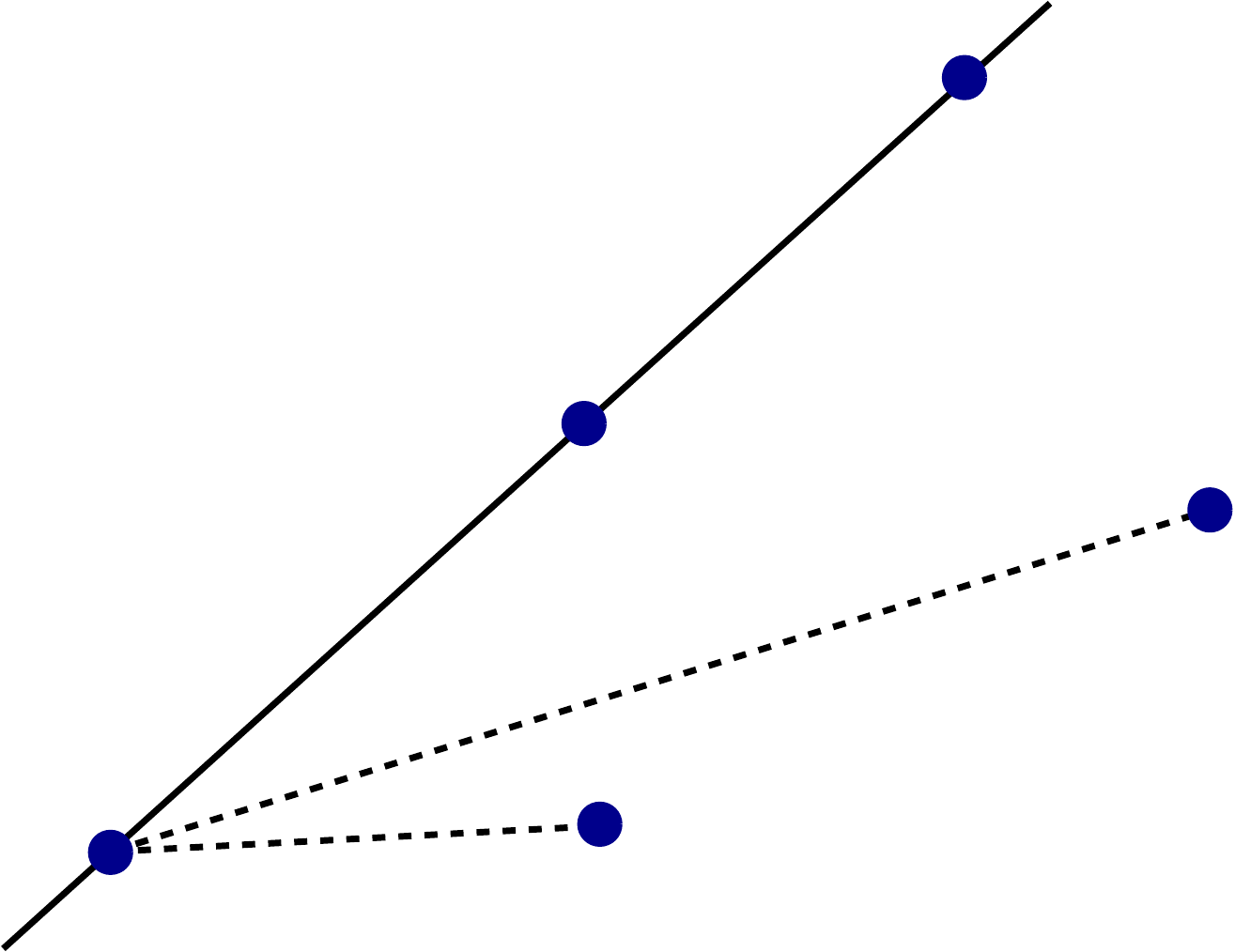}
					\begin{small}
						\put(-3,12){$A_1$}
						\put(34,46){$A_2$}
						\put(65,74){$A_3$}
						\put(100,30){$A_4$}
						\put(52,6){$A_5$}
					\end{small} 
				\end{overpic}
			}
		\end{tabular}
	\caption{In the case of \subref{figure:four_collinear} four collinear points,
the reconstruction algorithm does not work, since it is not possible to recover
the direction of the line on which the four points lie. Instead, if we only
allow \subref{figure:three_collinear} three collinear points, then the algorithm
succeeds since we can reconstruct the aligned points using the other ones.}
	\label{figure:reconstruction}
	\end{figure}
\end{remark}

Eventually, it is possible to extend the consequences of 
Theorem~\ref{theorem:photo} to tuples of~$n$ points when $n > 5$. In order to do
this, starting from such an $n$-tuple $\vec{A}$ one can define a photographic
map~$f_{\vec{A}}: C \longrightarrow M_n$, where $M_n$ is the moduli space of~$n$
points in $\p^1_{\C}$, in the same way as we did in this paper. Then for every
sub-tuple of~$5$ elements of $\vec{A}$, say $(A_1, \ldots, A_5)$, one has a
commutative diagram:
\begin{equation}
\label{equation:diagram_cor}
	\xymatrix{C \ar[rr]^{f_{\vec{A}}} \ar[rd]_-{f_{(A_1, \ldots,
A_5)}} & & M_n \ar@{-->}[ld]^-{\delta_{(1, \ldots, 5)}} \\ & M_5}
\end{equation}
where $\delta_{(1, \ldots, 5)}$ associates the equivalence class of the
$n$-tuple $(m_1, \ldots, m_n)$ to the equivalence class of the $5$-tuple
$(m_1, \ldots, m_5)$ (this is a rational map). 

\begin{cor}
\label{cor:photo}
	Theorem~\ref{theorem:photo} holds true also if we take $\vec{A}$ and
$\vec{B}$ to be two $n$-tuples of points in~$\R^3$ where no $n-1$ points are
collinear, provided that~$n \geq 5$.
\end{cor}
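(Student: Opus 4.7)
The plan is to reduce the $n$-point statement to the 5-point case (Theorem~\ref{theorem:photo}) via the commutative diagram~(\ref{equation:diagram_cor}), and then to glue the conclusions obtained on different 5-sub-tuples. For $S = \{i_1, \ldots, i_5\} \subset \{1, \ldots, n\}$, write $\vec{A}_S = (A_{i_1}, \ldots, A_{i_5})$ and $\vec{B}_S$ similarly.

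First, composing the equality $f_{\vec{A}}(C) = f_{\vec{B}}(C)$ with the rational projection $\delta_S\colon M_n \dashrightarrow M_5$ yields $f_{\vec{A}_S}(C) = f_{\vec{B}_S}(C)$. Hence, for every $S$ such that $\vec{A}_S$ has no four collinear points, Theorem~\ref{theorem:photo} applies and tells us that $\vec{A}_S$ and $\vec{B}_S$ are similar in the non-coplanar case, or coplanar and affine equivalent in the planar case.

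Next, I would fix a base 5-subset $S_0$ satisfying the theorem's hypothesis; the assumption that no $n-1$ of the $A_i$ are collinear guarantees such an $S_0$ exists, and if $\vec{A}$ is non-coplanar we can arrange $S_0$ so as to contain four non-coplanar points. Theorem~\ref{theorem:photo} then supplies a map $\alpha$ (a similarity in the non-coplanar case, a plane affinity after fixing a common plane in the coplanar case) sending $A_i$ to $B_i$ for every $i \in S_0$. For each $j \notin S_0$, I would build a new 5-subset $S_j$ containing $j$ together with four carefully chosen indices from $S_0$, so that $S_j$ still satisfies the theorem's hypothesis and the overlap $S_0 \cap S_j$ contains either four non-coplanar or three non-collinear points. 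Applying Theorem~\ref{theorem:photo} to $S_j$ yields another similarity/affinity $\alpha_j$, and since a similarity of $\R^3$ (respectively a plane affinity) is pinned down by four non-coplanar (respectively three non-collinear) point correspondences, we must have $\alpha_j = \alpha$, whence $\alpha(A_j) = B_j$. Iterating over $j$ produces the desired global equivalence.

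The main obstacle I foresee is the combinatorial verification that such $S_j$ always exist, in particular when many points of $\vec{A}$ lie on a single line (up to $n-2$, under the hypothesis) or when $\vec{A}$ is planar with several collinear sub-configurations. Handling these degenerate cases requires a careful use of the ``no $n-1$ collinear'' assumption: for instance, it ensures that any line containing $n-2$ of the $A_i$ still misses at least two points of $\vec{A}$, which gives enough freedom to construct $S_j$ both avoiding four collinear points and maintaining the required overlap with $S_0$.
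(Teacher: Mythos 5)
Your proposal is correct and follows essentially the same route as the paper: reduce to $5$-sub-tuples via the projection $\delta_S$, apply Theorem~\ref{theorem:photo} to tuples sharing a common rigid core (four non-coplanar points, respectively three non-collinear points in the planar case), and use the uniqueness of the resulting similarity/affinity to glue; the ``no $n-1$ collinear'' hypothesis is invoked for exactly the same purpose, namely to guarantee that the auxiliary $5$-sub-tuples avoid four collinear points.
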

\begin{proof}
	We prove the statement by reducing to the $n = 5$ case and applying 
Theorem~\ref{theorem:photo}. Suppose that~$\vec{A}$ is not coplanar; we want to
prove that $\vec{A}$ and $\vec{B}$ are similar. After possibly
relabeling the points, we can suppose that $A_1, \ldots, A_4$ are not coplanar.
By hypothesis we have that $f_{\vec{A}}(C) = f_{\vec{B}}(C)$, so from 
Diagram~\ref{equation:diagram_cor} we can infer that for every $k \geq 5$ we 
have $f_{(A_1, \ldots, A_4, A_k)}(C) = f_{(B_1, \ldots, B_4, B_k)}(C)$. Thus by
Theorem~\ref{theorem:photo} we get that for every $k \geq 5$, the two
$5$-tuples $(A_1, \ldots, A_4, A_k)$ and $(B_1, \ldots, B_4, B_k)$ are not
coplanar and similar. Now, since in this case there exists a unique similarity
sending $(A_1, \ldots, A_4)$ to $(B_1, \ldots, B_4)$, from what we said we
have that the same similarity sends $A_k$ to $B_k$ for all $k \geq 5$. Hence
$\vec{A}$ and $\vec{B}$ are similar. \\
If $\vec{A}$ is coplanar, then from the commutativity of 
Diagram~\ref{equation:diagram_cor} and by Theorem~\ref{theorem:photo} we obtain 
that also $\vec{B}$ is coplanar. Now we can proceed as before to get the thesis,
but here in order to be able to use Theorem~\ref{theorem:photo} we have to make
sure that we can choose $A_1, \ldots, A_4$ so that for every $k \geq 5$ there
are no~$4$ collinear points among $A_1, \ldots, A_4, A_k$. This is ensured by
the hypothesis that no~$n-1$ among the~$\{ A_i \}$ are collinear, since the
latter is the only case when this choice cannot be made. Hence we can conclude
as before, since an affinity is completely determined by the image of~$3$ non
collinear points. 
\end{proof}

\section{A necessary condition for pentapods with mobility 2}
\label{photogrammetry:pentapods}

We can finally apply the theory we developed so far to get necessary conditions
for mobility of pentapods. The geometry of this kind of mechanical manipulators
is defined by the coordinates of the~$5$ platform anchor points $p_1, \ldots,
p_5 \in \R^3$ and of the~$5$ base anchor points $P_1, \ldots, P_5 \in \R^3$ in
one of their possible configurations. All pairs of points $(p_i, P_i)$ are
connected by a rigid body, called \emph{leg}, so that for all possible
configurations the distance $d_i = \left\| p_i - P_i \right\|$ is preserved.
The dimension of the set of possible configurations of a pentapod is called its
\emph{mobility} (for a formal definition of this concept, see 
\cite{GalletNawratilSchicho}, Section 3, Definition 3.2).

In \cite{GalletNawratilSchicho} we proved the following
conditions for $n$-pods (replace~$5$ by~$n$ in the previous paragraph):

\begin{theorem}
\label{theorem:mobility_two}
	Let $\Pi$ be an $n$-pod with mobility~$2$ or higher. Then one of the
following holds:
	\begin{itemize}
		\item[(a)]
			there are infinitely many pairs $(L,R)$ of elements of $S^2$ such that the
points $\pi_L(p_1), \ldots, \pi_L(p_n)$ and $\pi_R(P_1), \ldots, \pi_R(P_n)$
differ by an inversion or a similarity;
		\item[(b)]
			there exists $m \leq n$ such that $p_1, \ldots, p_m$ are collinear and
$P_{m+1} = \ldots = P_n$, up to permutation of indices and interchange between
base and platform;
		\item[(c)]
			there exists $m \leq n$ with $1 < m < n-1$ such that $p_1, \ldots, p_m$
lie on a line $g \subseteq \R^3$ and $p_{m+1}, \ldots, p_n$ are located on a
line $g' \subseteq \R^3$ parallel to~$g$; moreover $P_1, \ldots, P_m$ lie on a
line $G \subseteq \R^3$ and $P_{m+1}, \ldots, P_n$ are located on a line $G'
\subseteq \R^3$ parallel to~$G$, up to permutation of indices.
	\end{itemize}
\end{theorem}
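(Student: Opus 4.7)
The plan is to apply \emph{bond theory} to the $n$-pod configuration space. First, I would use dual quaternions to embed the group~$\SE$ of direct Euclidean isometries into the Study quadric $\mathcal{S} \subseteq \p^7_\C$, so that each leg constraint $\left\| R\, p_i + t - P_i \right\|^2 = d_i^2$ becomes a quadratic equation $\Omega_i = 0$ in the Study parameters. The complexified configuration space $K_\Pi$ is then $\mathcal{S} \cap \{\Omega_1 = \cdots = \Omega_n = 0\}$, and the hypothesis that the mobility is at least~$2$ translates into $\dim K_\Pi \geq 2$.

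The key step is to analyze the \emph{bonds} of~$\Pi$, namely the points of $\overline{K_\Pi}$ that lie on the exceptional quadric $\mathcal{E} = \{e_0 = e_1 = e_2 = e_3 = 0\} \cap \mathcal{S}$. These boundary points have no Euclidean meaning in themselves, but a direct computation on the equations~$\Omega_i$ factors them through a map to $S^2 \times S^2$, assigning to each bond a pair $(L,R)$ of directions. The geometric content of a bond $(L,R)$ is that the orthogonal projections $\pi_L(p_i)$ of the platform and $\pi_R(P_i)$ of the base, viewed as $n$-tuples in~$\C$, are related by either an inversion or a similarity; establishing this interpretation is the central ingredient and is precisely what links the theorem to M\"obius photogrammetry.

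Once bonds are understood, the dimension hypothesis forces the bond locus to be at least one-dimensional. If the induced set of pairs in $S^2 \times S^2$ is infinite, we are in case~(a). Otherwise, the bond locus must concentrate over finitely many such pairs while still supplying a positive-dimensional family of limiting configurations, and one then has to read off from the equations~$\Omega_i$ what algebraic conditions on the $p_i$ and $P_i$ this imposes. The main obstacle is exactly this classification: showing that every non-case-(a) degeneration forces the anchor points into one of the explicit configurations of~(b) or~(c) requires a careful analysis of when a one-parameter family of points on~$\mathcal{E}$ can simultaneously annihilate all~$n$ leg equations, together with a symmetric treatment swapping base and platform. This is where the bulk of the work lies, since the combinatorics of which legs become degenerate at a given bond drives the case split between coincident base points (b) and parallel line configurations (c).
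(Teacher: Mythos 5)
The first thing to note is that the paper does not prove this statement at all: it is imported verbatim as Theorem~3.19 of the companion paper \cite{GalletNawratilSchicho} (``Bond theory for pentapods and hexapods''), introduced with the sentence ``In \cite{GalletNawratilSchicho} we proved the following conditions for $n$-pods.'' So there is no internal proof to compare against; the relevant comparison is with the bond-theoretic argument of that reference, and your outline does follow its strategy faithfully: the Study-quadric embedding of the isometry group, the leg constraints as quadrics $\Omega_i$, bonds as the intersection of the closure of the configuration set with the exceptional locus, the assignment of a pair $(L,R) \in S^2 \times S^2$ to each bond, and the dichotomy between an infinite image (case~(a)) and a positive-dimensional fiber over finitely many pairs (cases~(b) and~(c)).

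That said, as a proof your proposal has two genuine gaps, both of which you name but neither of which you close. First, the assertion that a bond forces $\pi_L(p_1), \ldots, \pi_L(p_n)$ and $\pi_R(P_1), \ldots, \pi_R(P_n)$ to differ by an inversion or a similarity is the heart of the matter: it requires actually restricting the equations $\Omega_i$ to the exceptional locus, parametrizing the isotropic directions there, and recognizing the resulting conditions on the anchor points as a M\"obius-type correspondence between the two projected $n$-tuples. You describe this as ``a direct computation,'' but that computation is exactly where the mathematical content lives, and nothing in the proposal carries it out. Second, the classification of a positive-dimensional bond set lying over finitely many pairs $(L,R)$ into precisely the configurations~(b) and~(c) --- collinear platform points with coincident base points, versus two pairs of parallel lines --- is a separate and delicate case analysis on which legs degenerate along such a family; gesturing at ``the combinatorics of which legs become degenerate'' does not establish that (b) and (c) exhaust the possibilities, which is the hardest part of the theorem. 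Until both steps are supplied, what you have is a correct plan rather than a proof, and the honest course in the context of this paper would be to cite \cite{GalletNawratilSchicho} as the authors do.
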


For $n = 5$ we can use our M\"obius Photogrammetry technique to reformulate
condition~(a) above in a more geometric fashion. 

\begin{theorem} 
\label{theorem:5pod}
		Let $\Pi$ a pentapod with mobility~$2$ or higher. Then one of the	following
conditions holds:
	\begin{itemize}
		\item[(a)] the platform and the base are similar;
		\item[(b)] the platform and the base are planar and affine equivalent;
		\item[(c)] there exists $m \leq 5$ such that $p_1, \ldots, p_m$ are
collinear and $P_{m+1}, \ldots, P_5$ coincide, up to permutation of indices and
interchange of platform and base;
		\item[(d)]
			the points $p_1, p_2, p_3$ lie on a line $g \subseteq \R^3$ and $p_{4},
p_5$ lie on a line $g' \subseteq \R^3$ parallel to~$g$, and $P_1, P_2, P_3$
lie on a line $G \subseteq \R^3$ and $P_{4}, P_5$ lie on a line $G' \subseteq
\R^3$ parallel to~$G$, up to permutation of indices.
	\end{itemize}
\end{theorem}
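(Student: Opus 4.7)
The plan is to invoke Theorem~\ref{theorem:mobility_two} with $n = 5$ and then reinterpret its first alternative in the language of the M\"obius camera, so that Theorem~\ref{theorem:photo} can be applied. First I observe that the three cases of Theorem~\ref{theorem:mobility_two} already account for two of the four alternatives in the statement: condition~(b) there specialized to $n = 5$ is exactly condition~(c) here, and condition~(c) there, being constrained by $1 < m < n-1 = 4$, forces $m \in \{2,3\}$; after the allowed permutation of indices the two parallel lines carry $3+2$ points on each side, which is precisely condition~(d). Hence the work is to show that alternative~(a) of Theorem~\ref{theorem:mobility_two} entails alternative~(a) or~(b) here.

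So assume there are infinitely many pairs $(L,R)\in S^2\times S^2$ such that the tuples $\pi_L(p_1),\ldots,\pi_L(p_5)$ and $\pi_R(P_1),\ldots,\pi_R(P_5)$ differ by an inversion or a similarity. Both inversions and (orientation-preserving or reversing) similarities of the plane extend to automorphisms of $\p^1_\C$, i.e.\ to M\"obius transformations; hence for each such pair the two projections determine the same point of~$M_5$, which means $f_{\vec{p}}(L) = f_{\vec{P}}(R)$. The locus $Z\subset C\times C$ of pairs with $f_{\vec{p}}(L) = f_{\vec{P}}(R)$ is Zariski closed, and having infinitely many points it must be positive-dimensional. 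Since the images $f_{\vec{p}}(C)$ and $f_{\vec{P}}(C)$ are irreducible (each the image of the irreducible curve $C$), positive-dimensionality of $Z$ forces a shared component, hence equality $f_{\vec{p}}(C) = f_{\vec{P}}(C)$, provided neither image is a single point. By Remark~\ref{remark:constant_map}, a photographic map is constant exactly when all five points are collinear, a situation already covered by condition~(c) with $m = 5$.

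With the identification $f_{\vec{p}}(C) = f_{\vec{P}}(C)$ in hand, Theorem~\ref{theorem:photo} gives the desired conclusion as soon as no four of the $p_i$ (and no four of the $P_i$) are collinear: the non-coplanar case yields condition~(a) (similarity), and the coplanar case yields condition~(b) (planar and affine equivalent). So the last step of the plan is to verify that the excluded configuration—four collinear anchor points on the platform or on the base—is absorbed by conditions~(c) or~(d): either the remaining fifth point of that side is itself aligned (case~(c) with $m = 5$) or it is not, in which case one checks that the opposite side of the pentapod must, by independent geometric reasoning on the leg constraints and using condition~(a) of Theorem~\ref{theorem:mobility_two}, display the same $4{+}1$ structure, placing the pentapod in case~(d) after relabeling.

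The principal obstacle I anticipate is the last step, namely the rigorous treatment of the four-collinear-points configurations. In that regime the image of the photographic map is a conic in $M_5$ (Lemma~\ref{lemma:5plane}, case~(d)), and by Remark~\ref{remark:reconstruction_four_collinear} this conic determines only the cross ratio of the four aligned points, not their relative position on the line; Theorem~\ref{theorem:photo} therefore does not apply directly. The cleanest way through is to show that whenever $\vec{p}$ has four collinear points while $\vec{P}$ does not, equality of the two image curves in $M_5$ is impossible because the degrees/configuration data (degrees in Lemmas~\ref{lemma:5plane} and~\ref{lemma:5space}, and the incidence pattern with the lines $L_{ij}$) are incompatible—thus forcing a symmetric $4{+}1$ structure on both sides and landing us in case~(c) or~(d).
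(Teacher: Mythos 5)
Your main line of argument coincides with the paper's: specialize Theorem~\ref{theorem:mobility_two} to $n=5$, note that its conditions~(b) and~(c) become conditions~(c) and~(d) of the present statement, reinterpret its condition~(a) as saying that the two irreducible curves $f_{\vec{p}}(C)$ and $f_{\vec{P}}(C)$ have infinitely many common points and hence coincide (after excluding the constant case of five collinear points), and then invoke Theorem~\ref{theorem:photo}. Your added care --- that inversions and similarities extend to M\"obius transformations, and the Zariski-closure argument for why infinitely many coincidences force equality of the images --- is correct and consistent with what the paper does implicitly.

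The genuine gap is in your treatment of the four-collinear-points configurations, which you rightly flag as the place where Theorem~\ref{theorem:photo} does not apply, but then resolve incorrectly. No ``independent geometric reasoning on the leg constraints'' is needed, nor any transfer of structure to the opposite side: if four of the $p_i$ are collinear, condition~(c) of the theorem is already satisfied with $m=4$, because the requirement that $P_{5},\ldots,P_{5}$ ``coincide'' is vacuous for a single point (this is precisely the paper's one-line dismissal: one may assume no four anchor points of platform or base are aligned, since otherwise one is in case~(c) or~(d)). Your proposed alternative --- forcing a symmetric $4{+}1$ structure on both sides and ``placing the pentapod in case~(d) after relabeling'' --- fails as stated: condition~(d) requires three points on one line and two on a \emph{parallel} line, and a $4{+}1$ configuration does not in general satisfy this, since the single off-line point together with one of the four aligned points need not span a line parallel to the first. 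The degree/incidence comparison you sketch in the last paragraph does show that both sides must simultaneously have four collinear points, but that conclusion is not needed once condition~(c) with $m=4$ is read correctly; as written, the final step of your plan does not land in any of the four alternatives.
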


\begin{proof}
Since $\Pi$ has mobility at least~$2$, then by 
Theorem~\ref{theorem:mobility_two} either we are in cases~(c) or~(d), or there
are infinitely many pairs $(L,R)$ of elements of $S^2$ such that the
points $\pi_L(p_1), \ldots, \pi_L(p_5)$ and $\pi_R(P_1), \ldots, \pi_R(P_5)$
differ by an inversion or a similarity. Let us consider then this last
case. Since we can suppose that no~$4$ point of the base or platform are aligned
(otherwise we are in case~(c) or~(d)), we have in particular that the
photographic maps $f_{\vec{P}}$ and $f_{\vec{p}}$ of base and platform points of
$\Pi$ are not constant. Hence, if we re-interpret the assumption in the language
we developed in this paper, we have that the images~$f_{\vec{P}}(C)$ 
and~$f_{\vec{p}}(C)$ have infinitely points in common. Since both are 
irreducible algebraic curves, they must coincide, and we get~(a) or~(b) by
Theorem~\ref{theorem:photo}.
\end{proof}

\begin{remark}
	For quadropods the analogous statement of Theorem~\ref{theorem:5pod}
does not hold. In fact, all quadropods have mobility at least~$2$, but the
general quadropod does not fulfill any of the conditions (a)--(d) of the
theorem. For tripods the statement is trivially true, since conditions~(b)
and~(c) are always fulfilled. 

	For $n$-pods with $n > 5$ one can prove a statement analogous to Theorem~
\ref{theorem:5pod} by using Corollary~\ref{cor:photo}.
\end{remark}

Based on Theorem~\ref{theorem:5pod} a complete classification of pentapods with
mobility~$2$ was given in~\cite{NawratilSchicho1} and~\cite{NawratilSchicho2}.

\section*{Acknowledgments}

The first and third author's research is supported by the Austrian Science Fund
(FWF): W1214-N15/DK9 and P26607 - ``Algebraic Methods in Kinematics: Motion
Factorisation and Bond Theory''. The second author's research is funded by the
Austrian Science Fund (FWF): P24927-N25 - ``Stewart Gough platforms with
self-motions''.


\begin{thebibliography}{10}

\bibitem{Dolgachev2003}
Igor Dolgachev.
\newblock {\em Lectures on invariant theory}, volume 296 of {\em London
  Mathematical Society Lecture Note Series}.
\newblock Cambridge University Press, Cambridge, 2003.

\bibitem{GalletNawratilSchicho}
Matteo Gallet, Georg Nawratil, and Josef Schicho.
\newblock Bond theory for pentapods and hexapods.
\newblock {\em Journal of Geometry}, pages 1--18, 2014.
\newblock Accepted [doi 10.1007/s00022-014-0243-1].

\bibitem{Hartshorne}
Robin Hartshorne.
\newblock {\em Algebraic geometry}.
\newblock Springer-Verlag, New York-Heidelberg, 1977.
\newblock Graduate Texts in Mathematics, No. 52.

\bibitem{HowardMillisonSnowdenVakil}
Benjamin Howard, John Millson, Andrew Snowden, and Ravi Vakil.
\newblock The equations for the moduli space of {$n$} points on the line.
\newblock {\em Duke Math. J.}, 146(2):175--226, 2009.

\bibitem{JonesSingerman}
Gareth~A. Jones and David Singerman.
\newblock {\em Complex functions. An algebraic and geometric viewpoint}.
\newblock Cambridge University Press, Cambridge, 1987.

\bibitem{Lichtblau}
Daniel Lichtblau.
\newblock Cylinders through five points: {C}omplex and {R}eal {E}numerative
  geometry.
\newblock In {\em Automated Deduction in Geometry}, volume 4869 of {\em Lecture
  Notes in Computer Science}, pages 80--97. Springer, Berlin, 2007.

\bibitem{Lichtblau2012}
Daniel Lichtblau.
\newblock Cylinders through five points: {C}omputational {A}lgebra and
  {G}eometry.
\newblock {\em J. Math. Res.}, 4(6):65--82, 2012.

\bibitem{MumfordFogartyKirwan}
D.~Mumford, J.~Fogarty, and F.~Kirwan.
\newblock {\em Geometric invariant theory}, volume~34 of {\em Ergebnisse der
  Mathematik und ihrer Grenzgebiete (2) [Results in Mathematics and Related
  Areas (2)]}.
\newblock Springer, Berlin, third edition, 1994.

\bibitem{NawratilSchicho1}
Georg Nawratil and Josef Schicho.
\newblock Pentapods with {M}obility 2.
\newblock {\em ASME Journal of Mechanisms and Robotics}.
\newblock In press [doi: 10.1115/1.4028934].

\bibitem{NawratilSchicho2}
Georg Nawratil and Josef Schicho.
\newblock Self-motions of pentapods with linear platform.
\newblock Submitted [arXiv:1407.6126].

\bibitem{Silhol}
Robert Silhol.
\newblock {\em Real algebraic surfaces}, volume 1392 of {\em Lecture Notes in
  Mathematics}.
\newblock Springer, Berlin, 1989.

\end{thebibliography}
\end{document}